\newtheorem{thm}{Theorem}[section]
\newtheorem{lemma}[thm]{Lemma}
\newtheorem{cor}[thm]{Corollary}
\newtheorem{prop}[thm]{Proposition}
\newtheorem{Def}[thm]{Definition}
\newtheorem{remark}[thm]{Remark}
\newcommand{\mr}[1]{\mathscr{#1}}
\def\H{\mathcal{H}}
\def\V{\mathcal{V}}
\def\R{\mathbb{R}}
\def\N{\mathbb{N}}
\def\and{\qquad\text{and}\qquad}
\def\({\left(}
\def\){\right)}
\def\eps{\varepsilon}
\def\p{\partial}
\def\al{\alpha}
\def\Om{\Omega}
\def\wto{\rightharpoonup}
\def\la{\langle}
\def\ra{\rangle}
\def\td{{\rm d}}
\def\tdx{{\rm d}x}
\def\tdt{{\rm d}t}
\def\bin{\text{~~in~~}}
\newcounter{cnum}
\newcounter{cnum}[chapter]
\def\C{\@ifnextchar[{\@with}{\@without}}
\def\@with[#1]{\@ifundefined{c@#1}{%
      \ifnum\thecnum<1
      \stepcounter{cnum}
      \fi
      \newcounter{#1}
      \setcounter{#1}{\thecnum}
      C_{\thecnum}
      \stepcounter{cnum}
    }{
    \ifnum\the\csname c@#1\endcsname<\thecnum
     C_{{\the\csname c@#1\endcsname}}
    \else
     C_{{\the\csname c@#1\endcsname}}
     \ifnum\thecnum<1
     \stepcounter{cnum}
     \fi
    \stepcounter{cnum}
    \fi
    }}
\def\@without{\ifnum\thecnum<1
        \stepcounter{cnum}
       \fi
       C_{\thecnum}
       \stepcounter{cnum}
    }
\numberwithin{equation}{section}
\begin{document}
\title[wave equation with displacement dependent damping]{Well-posedness and global attractor for wave equation with displacement dependent damping and super-cubic nonlinearity}

\author{Cuncai Liu}
\email{liucc@jsut.edu.cn}
\address{School of Mathematics and Physics, Jiangsu University of Technology, Changzhou, 213001,China}

\author{Fengjuan Meng}
\email{fjmeng@jsut.edu.cn}
\address{School of Mathematics and Physics, Jiangsu University of Technology, Changzhou, 213001,China}

\author{Chang Zhang}
\email{chzhnju@126.com}
\address{School of Mathematics and Physics, Jiangsu University of Technology, Changzhou, 213001,China}
\thanks{
    This work was supported by the NSFC(12026431, 12326365, 12442050), Natural Science Fund For Colleges and
    Universities in Jiangsu Province (22KJD110001), Jiangsu 333 Project, QingLan Project of Jiangsu Province and Zhongwu Young Innovative Talents Support Program.}

\begin{abstract}
    This work investigates the semilinear wave equation featuring the displacement dependent term $\sigma(u)\partial_t u $ and nonlinearity $f(u)$. By developing refined space-time a priori estimates under extended ranges of the nonlinearity exponents with $\sigma(u)$ and $f(u)$, the well-posedness of the weak solution is established. Furthermore, the existence of a global attractor in the naturally phase space $H^1_0(\Omega)\times L^2(\Omega)$ is obtained. Moreover, the regularity of the global attractor is established, implying that it is a bounded subset of $(H^2(\Omega)\cap H^1_0(\Omega))\times H^1_0(\Omega)$.
\end{abstract}

\subjclass[2020]{Primary 35B40; Secondary 35B45, 35L70}
\keywords{Wave equation; Displacement dependent damping; Super-cubic nonlinearity; Space-time estimate; Global attractor}

\maketitle
\setcounter{tocdepth}{1}
\tableofcontents
\section{Introduction}
In this paper, we are concerned with the following nonlinear wave equation with displacement-dependent damping of the form
\begin{align}\label{eq1}
    \begin{cases}u_{tt}+\sigma(u)u_t-\Delta u+f(u)=\phi, ~(t,x)\in \mathbb{R}_{+}\times\Omega, \\
        u(x,0)=u_0,~~ u_{t}(x,0)=u_1,  x\in \Omega,                                   \\
        u(x,t)=0,~ (t,x)\in \mathbb{R}_{+}\times \partial\Omega.
    \end{cases}
\end{align}
where the damping coefficient $\sigma(u)$ explicitly depends on the displacement field and $\Omega$ is a bounded domain in $\mathbb{R}^3$ with smooth boundary, the external force term $\phi\in L^2(\Omega)$ is independent of time.

We impose the following assumptions on the damping coefficient $\sigma$ and the nonlinearity $f$.

\textbf{Assumptions on damping coefficient $\sigma$}. Let $\sigma\in C^1(\mathbb{R})$ and
\begin{equation}\label{S1}
    \sigma(s)\ge \sigma_0(1+|s|^r)>0,\tag{S1}
\end{equation}
and
\begin{equation}\label{S2}
    |\sigma^\prime(s)|\le cm|s|^{m-1},\quad\text{for  }|s|\ge 1,\tag{S2}
\end{equation}
where $0\le r\le m\le 4$, the extra coefficient $m$ in \eqref{S2} is for the sake of keeping compatible with the case $m=0$.

\textbf{Assumptions on nonlinearity $f$}. Let $f\in C^2(\mathbb{R})$, with $f(0)=f^{\prime}(0)=0$ and satisfies the following growth condition
\begin{equation}\label{F1}
    |f^{\prime\prime}(s)|\le  c(1+|s|^{p-2}),\quad 2\le p\le k+3,\tag{F1}
\end{equation}
where $k=\min\{r,4-m\}$, along with the dissipative condition
\begin{equation}\label{F2}
    \liminf\limits_{|s|\rightarrow\infty}\frac{f(s)}{s}> -\lambda,\tag{F2}
\end{equation}
and pseudo-monotonicity condition
\begin{equation}\label{F3}
    f^\prime(s)\ge -K,\tag{F3}
\end{equation}
where $\lambda>0$ is less than the first eigenvalue $\lambda_1$ of $-\Delta$ with Dirichlet boundary condition on $\Omega$.

Set $F(s)=\int_0^s f(\tau)\td \tau$ and $\Sigma(s)=\int_0^s\sigma(\tau)\td \tau$, under the above assumptions, we have
$$f(s)s\ge -\lambda s^2-C,\quad F(s)>-\dfrac{\lambda}{2}s^2-C,$$
and
$$ F(s)\le C(1+|s|^{p+1}), \quad |\Sigma(s)|\le C(1+|s|^{m+1}).$$

For the sake of convenience and simplicity, we denote the inner product and the norm in $L^2(\Omega)$ by $\langle\cdot,\cdot\rangle$ and $\|\cdot\|$ respectively, and the norm in $L^q(\Omega)$ by $\|\cdot\|_{q}$. Concerning the phase-spaces for our problem, we consider
$\H=H^1_0(\Omega)\times L^2(\Omega)$ and $\V=(H^2(\Omega)\cap H^1_0(\Omega))\times H^1_0(\Omega)$.

Wave phenomena arise in diverse physical, engineering, biological, and geoscientific systems. These dynamics are often modeled using various types of damping mechanisms, including: linear damping $\alpha u_t$, strong damping $-\Delta u_t$, nonlinear damping $g(u_t)$,structural damping$(-\Delta)^\alpha u_t$, displacement-dependent damping $\sigma(u)u_t$. Such models have been extensively investigated in \cite{babin92, ball04, chueshov02, chueshov04, chueshov08, feireisl95} et al.

In the one-dimensional case, Eq.\eqref{eq1} describes the dynamics of vibrating strings or analogous one-dimensional systems in media with nonlinear damping characteristics. In \cite{gatti06}, Gatti et al. proved the existence of a regular
connected global attractor of finite fractal dimension for the associated dynamical
system, as well as the existence of an exponential attractor for this  one-dimensional problem. Later, Khanmamedov \cite{khan12} extended these results by proving the existence of global attractors under weaker conditions, requiring only positivity or strict positivity of the damping coefficient function $\sigma(\cdot)$.

In the two-dimensional setting, Eq.\eqref{eq1} models wave phenomena such as vibrating membranes in stratified viscous media, playing a fundamental role in acoustics, fluid dynamics, and materials science.  Pata et al. \cite{pata07}, established the existence of a compact global attractor with optimal regularity, along with an exponential attractor. Subsequently, Khanmamedov\cite{khan10-2d} extended these results by proving the existence of global attractors under weaker conditions and further demonstrating their regularity.

In three spatial dimensions, Eq.\eqref{eq1} serves as a fundamental mathematical framework for describing nonlinear wave propagation in dissipative media. The model captures essential features of energy dissipation through its nonlinear damping terms, making it particularly relevant for several important applications including seismic wave attenuation in viscoelastic geological media, vibration damping in aerospace structural components, and stress wave propagation in nonlinear elastic solids. Pata et al. \cite{pata06} established infinite-dimensional global and exponential attractors in a
slightly weaker topology in situations where $r=0,m\leq2,p\leq3$,  Khanmamedov\cite{khan10-3d} later demonstrated the regularity of the weak attractor and the existence of a strong attractor under some additional conditions on $\sigma(\cdot)$.

Recently, Ma et al.\cite{ma} investigated the uniform attractors of Eq.\eqref{eq1} in the case of two-dimensional setting.

The asymptotic behavior of damped wave equation depends strongly on the growth rate of the nonlinearity $f$, for the linear damping, see \cite{vk16,liu17} and the references therein, for the nonlinear damping, see \cite{liu23} and the associated references.

For the displacement dependent damping wave equation \eqref{eq1}, the well-posedness and the long-term behavior also depends strongly on the growth rate $m$ of the damping coefficient $\sigma$ and $p$ of the nonlinearity $f$, moreover, $m$ strongly related to $p$. We summarize the known results through Figure 1 for clarity, the weak solution (or energy solution) of problem \eqref{eq1} is well-posed in region I, i.e., $m\le 2$ and $p\le 3$. The existence of the global attractor was investigated in region I, and also the regularity of the global attractor was proved in region I under an extra condition on damping coefficient $\sigma$.

\begin{center}
    \begin{tikzpicture}
        [cube/.style={very thick,black},
            grid/.style={very thin,gray},
            axis/.style={->,black,thick}]

        \draw[axis] (0,0) -- (6,0) node[anchor=west]{$p$};
        \draw[axis] (0,0) -- (0,5) node[anchor=west]{$m, r$};
        \coordinate (A) at (1,1);

        \draw (0,0) -- (0, 0.1);
        \node[below] at (0,0){0};

        \draw (1,0) -- (1, 0.1);
        \node[below] at (1,0){1};

        \draw (3,0) -- (3, 0.1);
        \node[below] at (3,0){3};

        \draw (5,0) -- (5, 0.1);
        \node[below] at (5,0){5};

        \draw (0,2) -- (0.1,2);
        \node[left] at (0,2){2};

        \draw (0,4) -- (0.1, 4);
        \node[left] at (0,4){4};

        \draw[thick, fill=black!10] (1,0) -- (1,2) -- (3,2) -- (3,0) -- (1,0);
        \draw[thick, fill=black!20]  (1,2) -- (1,4) -- (3,4)-- (5,2) -- (3,0) -- (3,2) -- (1,2);

        \draw[thick, dashed] (5,0) -- (5,4) -- (3,4);
        \draw[thick] (1,4) -- (3,4);

        \draw[thick] (3,0) -- (5,2) -- (3,4);

        \node[font=\fontsize{8}{8}] at (2,1){I};
        \node[font=\fontsize{8}{8}] at (3,3){II};
        \node[font=\fontsize{8}{8}] at (4.3,3.5){$p \le 7-m$};
        \node[font=\fontsize{8}{8}] at (3.8,1.5) {$p\le r+3$};

        \node at (3,-0.8) {Figure 1. Region of exponents for damping term and nonlinearity};
    \end{tikzpicture}
\end{center}

In \cite{liu23}, the authors investigate the wave equation with nonlinear interior damping
$$u_{tt}+g(u_t)-\Delta u+f(u)=\phi,$$
while the damping function $g(s)\approx  |s|^{m-1}s$. 
Utilizing the dissipativity relation 
$$\int_{0}^{+\infty}\int_{\Omega}g(u_{t})u_{t}\td x\td t\le +\infty,$$
and a nonlinear test function, we can establish a priori space-time estimates for the weak solution. These estimates enable us to handle the nonlinear term $f(u)$ with growth exponents up to $\min\{3m,5\}$.

Notice that in the definition of weak solution (Definition\ref{def_weak}) of equation \eqref{eq1}, the dissipative mechanism encoded in $\sigma(u)u_t$ yields $\int_0^\infty\int_\Omega \sigma(u)|u_t|^2 \tdx\tdt<\infty$, which fundamentally transcends the energy-space containment. Following the strategy employed for interior damping case, we can also construct a priori space-time estimates for Equation \eqref{eq1} via appropriately designed nonlinear test functions. Then we can extend exponents for both the damping term and nonlinearity to the parameter space defined by region II as illustrated in Figure 1.


The paper is organized as follows. Section 2 is devoted to the a priori estimate of weak solution. In Section 3, well-posedness of the weak solution is establised. The existence of the global attractor is the focus of Section 4. In the end, the regularity of the global attractor in $(H^2(\Omega)\cap H^1_0(\Omega))\times H^1_0(\Omega)$ is shown in Section 5.

Throughout the paper, the symbol $C$ stands for a generic constant indexed occasionally for the sake of clarity, the different positive constants $C_i, i\in \mathbb{N}$ also used for special differentiation in the paper. In the following discussion, $ a\lesssim b$ means $ a\le  Cb$ for various values of $C$.

\section{The a priori estimate of weak solution }
In this section, we dedicate to the a priori estimate of weak solution for the problem \eqref{eq1}. The estimate not only guarantee the uniqueness of the weak solution and the validity of the energy equality, but also serve to prove the norm-to-weak continuity of the associated semigroup. Fisrtly, let's recall the definition of weak solution for problem \eqref{eq1}.
\begin{Def}[Weak solution]\label{def_weak}
    A function $u$ is a weak solution of problem \eqref{eq1} if for any $T>0$
    $$\xi_u(t)=(u(t),u_t(t))\in L^\infty(0,T;\H), \quad \sigma(u)|u_t|^2\in L^1([0,T]\times\Omega)$$
    and equation \eqref{eq1} is satisfied in the sense of distribution, i.e.
    $$\int_{0}^{T}\int_\Omega \left(-u_t\psi_t+\sigma(u)u_t\psi+\nabla u\nabla \psi+f(u)\psi\right)\td x \td t=\int_{0}^{T}\int_\Omega \phi\psi\td x\td t$$
    for any $\psi \in C_0^{\infty}((0,T)\times \Omega)$.
\end{Def}

\begin{thm}[The a priori estimate of weak solution]\label{priori}
    Assume that $\phi\in L^2(\Omega)$ and conditions \eqref{S1}-\eqref{F2} hold. Suppose that $u(t)$ is a weak solution of equation \eqref{eq1} on $[0, T]$, then the following space-time estimate
    \begin{equation}\label{space-time}
        \| u\|_{L^{k+2}(0,T;L^{3k+6}(\Omega))}\le C(R^4T+R^5)
    \end{equation}
    holds, where $R=\sup_{0\le t\le T}(\|u(t)\|_6+\|u_t(t)\|)+\int_0^T\int_\Omega \sigma(u)u_t^2\td x\td t$+1 and the constant $C$ is independent of $u$ and $T$.
\end{thm}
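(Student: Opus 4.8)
The plan is to test the equation against the nonlinear multiplier $\psi=|u|^{k}u$, which converts $H^1_0$-regularity into $L^{3k+6}$-integrability through $H^1_0(\Omega)\hookrightarrow L^6(\Omega)$. Since $\nabla(|u|^{k}u)=(k+1)|u|^{k}\nabla u$ and $|u|^{k}|\nabla u|^2=\tfrac{4}{(k+2)^2}\big|\nabla(|u|^{k/2}u)\big|^2$, the Laplacian term produces
\begin{align*}
\int_0^T\!\!\int_\Omega\nabla u\cdot\nabla(|u|^{k}u)\,\td x\td t
&=\frac{4(k+1)}{(k+2)^2}\int_0^T\big\|\nabla(|u|^{k/2}u)\big\|^2\,\td t\\
&\ge c_0\int_0^T\|u\|_{3k+6}^{k+2}\,\td t=c_0\,\|u\|_{L^{k+2}(0,T;L^{3k+6})}^{k+2},
\end{align*}
which is exactly the quantity to be estimated. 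Rigorously $|u|^{k}u$ is not admissible in Definition~\ref{def_weak}, so I would first run the computation with the mollified, truncated multipliers $\psi_N\approx\min\{|u|,N\}^{k}u$ together with a time cut-off, derive the bound uniformly in $N$, and pass to the limit using Fatou's lemma and the weak continuity $\xi_u\in C_w([0,T];\H)$ that the equation provides; this truncation is also what legitimizes the manipulations of $u_{tt}$ and the integrability of $f(u)|u|^{k}u$.

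Solving the tested identity for the gradient term, one must control the four remaining contributions. (i) Inertial term: integrating by parts in time gives $\int_0^T\!\!\int_\Omega u_{tt}|u|^{k}u=\big[\int_\Omega u_t|u|^{k}u\big]_0^T-(k+1)\int_0^T\!\!\int_\Omega|u|^{k}u_t^2$; the endpoint terms satisfy $\big|\int_\Omega u_t|u|^{k}u\big|\le\|u_t\|\,\|u\|_{2k+2}^{k+1}\lesssim R^{k+2}$ because $2k+2\le6$, while $\int_0^T\!\!\int_\Omega|u|^{k}u_t^2\le\sigma_0^{-1}\int_0^T\!\!\int_\Omega(1+|u|^{r})u_t^2\le\sigma_0^{-1}\int_0^T\!\!\int_\Omega\sigma(u)u_t^2\le\sigma_0^{-1}R$, using $k\le r$ and \eqref{S1}. (ii) Displacement damping: this is an exact time derivative, $\int_0^T\!\!\int_\Omega\sigma(u)u_t|u|^{k}u=\big[\int_\Omega G(u)\big]_0^T$ with $G(s):=\int_0^{s}\sigma(\tau)|\tau|^{k}\tau\,\td\tau$, and \eqref{S2} (which yields $\sigma(s)\le C(1+|s|^{m})$) gives $|G(s)|\lesssim|s|^{k+2}+|s|^{m+k+2}$, so this term is $\lesssim R^{m+k+2}$ since $m+k\le4$ forces $m+k+2\le6$. (iii) Nonlinearity, used only from below: multiplying $f(s)s\ge-\lambda s^2-C$ by $|u|^{k}\ge0$ and integrating yields $\int_0^T\!\!\int_\Omega f(u)|u|^{k}u\ge-\lambda\int_0^T\|u\|_{k+2}^{k+2}\,\td t-C\int_0^T\|u\|_{k}^{k}\,\td t\gtrsim-R^{k+2}T$. (iv) Forcing term: $\big|\int_0^T\!\!\int_\Omega\phi|u|^{k}u\big|\le\|\phi\|\int_0^T\|u\|_{2k+2}^{k+1}\,\td t\lesssim R^{k+1}T$.

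Collecting the pieces, and observing that $0\le r\le m\le4$ forces $k=\min\{r,4-m\}\le\min\{m,4-m\}\le2$ — which is exactly what validates the Sobolev comparisons $2k+2\le6$ and $m+k+2\le6$ used above — one gets
$$c_0\,\|u\|_{L^{k+2}(0,T;L^{3k+6})}^{k+2}\lesssim R^4T+R^{k+2}+R+R^{m+k+2}+R^{k+1}T\lesssim R^4T+R^6.$$
Taking the $(k+2)$-th root, using subadditivity of $t\mapsto t^{1/(k+2)}$, the elementary bound $t^{1/(k+2)}\le t+1$, and $R^{6/(k+2)}\le R^{3}$ (valid since $k+2\ge2$ and $R\ge1$), one concludes $\|u\|_{L^{k+2}(0,T;L^{3k+6})}\le C(R^4T+R^5)$ with $C$ independent of $u$ and $T$, i.e.\ \eqref{space-time}.

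I expect the main obstacle to be not any individual estimate — each is short — but the rigorous justification of the nonlinear test function: a weak solution satisfies the equation only in $\mathcal{D}'$, so the approximating multipliers must be chosen so that (i) the sign of $\int|u|^{k}|\nabla u|^2$ and of the $f$-contribution survive the limit, (ii) the two endpoint terms at $t=0,T$ converge (which needs the weak continuity of $\xi_u$ extracted from the equation), and (iii) the truncation level can be sent to infinity with all constants independent of $N$ and $T$. Once this bookkeeping is in place, the exponent arithmetic forced by $k=\min\{r,4-m\}$ does the rest.
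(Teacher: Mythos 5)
Your proposal is correct and follows essentially the same route as the paper: test the equation with a regularized version of $|u|^{k}u$ (the paper uses $M_\eps(s)=\tfrac{|s|^{k}}{1+\eps|s|^{k}}s$ rather than a truncation), extract $\|u\|_{3k+6}^{k+2}$ from the gradient term via the Sobolev embedding, control the inertial term through the dissipation integral using $k\le r$ and \eqref{S1}, and pass to the limit by Fatou's lemma. The only local difference is the damping term, which you integrate exactly as $\bigl[\int_\Omega G(u)\,\td x\bigr]_0^T$ with $G(s)=\int_0^s\sigma(\tau)|\tau|^{k}\tau\,\td\tau$ and bound using $m+k+2\le 6$, whereas the paper applies Cauchy--Schwarz against $\int_\Omega\sigma(u)u_t^2\,\td x$ and absorbs half of the resulting $\|m_\eps(u)\|_6^2$ into the gradient term; both variants yield the stated estimate.
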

\begin{proof}
    For $\eps\in (0,1)$, we choose a nonlinear test function $M_\eps(s)=\dfrac{|s|^{k}}{1+\eps|s|^{k}}s$, then $M_\eps^\prime(s)=\dfrac{k+1+\eps|s|^{k}}{(1+\eps|s|^{k})^2}|s|^{k}$. We have the following estimates
    \begin{equation}
        |M_\eps(s)|\le \frac{|s|}{\eps},\quad 0\le M_\eps^\prime(s)\le \frac{k+1}{\eps}
    \end{equation}
    and
    \begin{equation}\label{H_uni}
        |M_\eps(s)|\le |s|^{k+1},\quad 0\le M_\eps^\prime(s)\le (k+1)|s|^{k}.
    \end{equation}
    Therefore $M_\eps(u)\in L^\infty(0,T;H^1_0(\Omega))$, this fact allows us to multiply the equation \eqref{eq1} by $M_\eps(u)$, then integrate on $[0, T]\times \Omega$, the resulting identity is
    \begin{equation}\label{eqh}
        \int_{0}^T\int_\Omega\left(M_\eps(u)u_{tt}+M_\eps^{\prime}(u)|\nabla u|^2+M_\eps(u)(\sigma(u)u_t+f(u)-\phi)\right)\td x\td t=0.
    \end{equation}

    The space-time norm $\|\cdot\|_{L^{k+2}(0,T;L^{3k+6}(\Omega))}$ derives from the second term on the left hand side. We observe that
    \begin{equation}
        M_\eps^\prime(u)|\nabla u|^2\ge \frac{|u|^{k}}{1+\eps|u|^{k}}|\nabla u|^2=|\nabla m_\eps(u)|^2
    \end{equation}
    where$$ m_\eps(u)=\int_0^{|u|}\frac{s^{\frac{k}2}}{({1+\eps s^{k}})^{\frac12}}\td s.$$
    Then by Sobolev embedding inequality, we have
    \begin{equation}\label{hd}
        \int_\Omega M_\eps^\prime(u)|\nabla u|^2\td x\ge \|\nabla m_\eps(u)\|^2\ge C_1 \|m_\eps(u)\|^{2}_{6}.
    \end{equation}
    Using integration by parts on $t$, we can obtain
    \begin{equation}\label{ibp}
        \int_{0}^T\int_\Omega M_\eps(u)u_{tt} \td x\td t= \left.\int_\Omega M_\eps(u(t))u_{t}(t)\td x\right|_0^T-\int_{0}^T\int_\Omega M_\eps^\prime(u)|u_{t}|^2 \td x\td t
    \end{equation}
    and
    \begin{equation}
        \int_\Omega M_\eps^{\prime}(u)|u_t|^2\td x\le \int_\Omega (k+1)|u|^{k}|u_t|^2\td x\le C \int_\Omega \sigma(u)|u_t|^2 \td x.
    \end{equation}
    Since $k\le 2$, we can obtain that
    \begin{equation}\label{hut}
        \left\vert\int_\Omega M_\eps(u)u_t\td x\right\vert \lesssim \|u\|_{6}^{k}\|u_t\|
    \end{equation}
    and
    \begin{equation}\label{hphi}
        \left|\int_\Omega M_\eps(u)\phi \td x\right| \lesssim \|u\|_{6}^{k}\|\phi\|.
    \end{equation}
    For the damping term in \eqref{eqh}, according to \eqref{S2} and the fact $ |M_\eps(s)|\le  C|s|^{\frac{k}{2}}m_\eps(s)$, we have
    \begin{equation}
        \begin{aligned}
            \left\vert\int_\Omega M_\eps(u)\sigma(u)u_t \td x\right\vert & \le  \left(\int_\Omega M_\eps^2(u)\sigma(u) \td x\int_\Omega \sigma(u)|u_t|^2 \td x\right)^{\frac12}      \\
                                                                         & \le  C\left(\int_\Omega (1+|u|^{k+m})m_\eps^2(u) \td x\int_\Omega \sigma(u)|u_t|^2 \td x\right)^{\frac12} \\
                                                                         & \le  C\|m_\eps(u)\|_6 \left((1+\|u\|_6^{k+m})\int_\Omega \sigma(u)|u_t|^2 \td x\right)^{\frac12}          \\
                                                                         & \le \frac{C_1}2\|m_\eps(u)\|^2_6+C(1+\|u\|_6^{k+m}) \int_\Omega \sigma(u)|u_t|^2 \td x.
        \end{aligned}
    \end{equation}
    Applying condition \eqref{F2}, we obtain
    \begin{equation}\label{hf}
        \int_\Omega M_\eps(u)f(u)\td x\ge -\lambda_1 \int_\Omega |u|^{k+1}\td x-C\ge -C\|u\|_6^{k+1}-C.
    \end{equation}
    Substituting \eqref{hd}-\eqref{hf} into identity \eqref{eqh}, we deduce
    \begin{equation}
        \int_0^T \|m_\eps(u)\|^2_6\td t \le C(R^4T+R^5).
    \end{equation}
    Let $\eps\to 0^{+}$, since $m_\eps(u)\to |u|^{\frac{k+2}2}$ a.e. in $[0,T]\times \Omega$, then by Fatou's lemma, we can obtain
    \begin{equation}\label{ste}
        \int_0^T \|u\|_{3k+6}^{k+2}\td t \le C(R^4T+R^5).
    \end{equation}
\end{proof}
\begin{remark}\label{rem_fu}
    By \eqref{S1} and H\"{o}lder's inequality, we have
    \begin{equation}
        \int_\Omega f^2(u)\sigma^{-1}(u)\td x\le C \int_\Omega (1+|u|^{2p-r})\td x\le C+C\|u\|^{\alpha (2p-r)}_{3k+6}\|u\|^{(1-\alpha) (2p-r)}_{6},
    \end{equation}
    where
    $$\alpha=(\frac16-\frac{1}{2p-r})/(\frac16-\frac1{3k+6}).$$
    Since
    $$\alpha(2p-r)=\frac{(2p-r-6)(k+2)}{k}\le k+2,$$
    by space-time estimate \eqref{space-time}, for any weak solution $u(t)$ of problem \eqref{eq1}, we can deduce that
    $$f(u)\sigma^{-\frac12}(u)\in L^{2}((0,T)\times\Omega).$$
\end{remark}

\section{Well-posedness of weak solution}
This section is devoted to investigating the well-posedness of weak solutions for problem \eqref{eq1}. We initiate our analysis by establishing existence through the Faedo-Galerkin approximation scheme. Building upon this existence result, we subsequently derive an energy equality via variational techniques. This energy identity plays a pivotal role in demonstrating the asymptotic compactness of the associated semigroup through rigorous energy dissipation estimates. Furthermore, we prove the norm-to-weak continuity of the semigroup operator by employing weak convergence arguments and compactness criteria in Sobolev spaces.

\subsection{Existence of weak solutoin}
\begin{thm}\label{weak}
    Suppose that all assumptions of Theorem \ref{priori} hold. Then, for every $(u_0,u_1) \in \H$, there exists a global solution $u(\cdot)$ of equation \eqref{eq1} and the following dissipative estimate holds
    \begin{equation}\label{weak_energy_est}
        \| (u(t),u_t(t))\|_{\H} \le Q(\| (u_0,u_1)\|_{\H})e^{-t}+Q(\|\phi\|),
    \end{equation}
    $Q$ is a continuous increasing function.
\end{thm}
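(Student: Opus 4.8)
The plan is to construct the solution by the Faedo--Galerkin method, to control the approximants by the classical energy estimate, and to pass to the limit using a.e.\ convergence; the dissipative bound \eqref{weak_energy_est} then comes from a perturbed energy functional combined with the monotonicity of the energy. Note that the space--time estimate of Theorem~\ref{priori} is \emph{not} needed for this argument: since $k=\min\{r,4-m\}\le2$ (because $r\le m$), one has $p\le k+3\le5<6$, so $\{f(u_n)\}$ will be equi-integrable directly; Theorem~\ref{priori} and Remark~\ref{rem_fu} enter only later, for the uniqueness of the weak solution and the energy equality.

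Let $\{w_j\}$ be the $L^2(\Omega)$-orthonormal basis of Dirichlet eigenfunctions of $-\Delta$, $P_n$ the associated projections, and $u_n(t)=\sum_{j\le n}g_{nj}(t)w_j$ the solution of the projected system with data $P_nu_0,P_nu_1$; local existence holds by standard ODE theory since $\sigma\in C^1$, $f\in C^2$. Testing with $\partial_t u_n$ gives $\tfrac{d}{dt}E_n+\int_\Omega\sigma(u_n)|\partial_t u_n|^2\,\tdx=0$ with $E_n=\tfrac12\|\partial_t u_n\|^2+\tfrac12\|\Nx u_n\|^2+\int_\Omega F(u_n)\,\tdx-\la\phi,u_n\ra$. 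By \eqref{F2}, Poincar\'e and $\lambda<\lambda_1$ one has $E_n\ge c\,\|\xi_{u_n}\|_\H^2-C(1+\|\phi\|^2)$, while $E_n(0)\le Q(\|\xi_{u_n}(0)\|_\H)$ since $F(s)\le C(1+|s|^{p+1})$ and $p+1\le6$; hence $E_n$ is nonincreasing and
\begin{equation*}
\sup_{t\ge0}\|\xi_{u_n}(t)\|_\H^2+\int_0^\infty\!\!\int_\Omega\sigma(u_n)|\partial_t u_n|^2\,\tdx\,\tdt\le Q(\|\xi_{u_n}(0)\|_\H),
\end{equation*}
which in particular makes $u_n$ global.

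Extract, along a subsequence, $u_n\overset{*}{\rightharpoonup}u$ in $L^\infty(0,T;H^1_0(\Omega))$ and $\partial_t u_n\overset{*}{\rightharpoonup}\partial_t u$ in $L^\infty(0,T;L^2(\Omega))$; the Aubin--Lions--Simon lemma gives $u_n\to u$ in $C([0,T];L^2(\Omega))$, so $u_n,f(u_n),\sigma(u_n)$ converge a.e.\ on $(0,T)\times\Omega$. Since $|f(s)|\le C(1+|s|^p)$ with $p\le5$, $\{f(u_n)\}$ is bounded in $L^{6/p}((0,T)\times\Omega)$, hence equi-integrable, and Vitali's theorem yields $f(u_n)\to f(u)$ in $L^1$. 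The delicate term is $\sigma(u_n)\partial_t u_n$: write it as $\big(\sqrt{\sigma(u_n)}\,\partial_t u_n\big)\sqrt{\sigma(u_n)}$, where the first factor is bounded in $L^2((0,T)\times\Omega)$ by the energy bound and converges weakly there (to its a.e.\ limit $\sqrt{\sigma(u)}\,\partial_t u$, identified by testing against $\sqrt{\sigma(u_n)}\chi$), while for every $\psi\in C_0^\infty((0,T)\times\Omega)$ one has $\sqrt{\sigma(u_n)}\,\psi\to\sqrt{\sigma(u)}\,\psi$ \emph{strongly} in $L^2((0,T)\times\Omega)$, because $\sigma(s)\le C(1+|s|^m)$ with $m\le4$ makes $\{\sqrt{\sigma(u_n)}\,\psi\}$ bounded in $L^\infty(0,T;L^{12/m}(\Omega))$, $12/m>2$, hence equi-integrable, and it converges a.e. The weak--strong product passes to the limit, giving $\int\!\!\int\sigma(u_n)\partial_t u_n\,\psi\to\int\!\!\int\sigma(u)\partial_t u\,\psi$; together with the (linear) convergence of the remaining terms and the recovery of the initial data from $u_n(0)\to u_0$ in $H^1_0$, $\partial_t u_n(0)\to u_1$ in $L^2$, this shows $u$ is a weak solution in the sense of Definition~\ref{def_weak}, with $\sigma(u)|\partial_t u|^2\in L^1$ by weak lower semicontinuity. \textbf{I expect this limit passage in the displacement-dependent damping term to be the main obstacle}: $\sigma(u_n)\partial_t u_n$ carries no sign and is not controlled in any Lebesgue space better than what the growth of $\sigma$ and the mere weak convergence of $\partial_t u_n$ allow, so it cannot be handled by energy-space compactness but only through the integrability $\int\!\!\int\sigma(u_n)|\partial_t u_n|^2<\infty$ furnished by the damping, and the factorization above --- exploiting that a fixed test function meets only the controllable growth $|s|^{m/2}$ of $\sqrt{\sigma}$ --- is what makes it go through.

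Finally, for \eqref{weak_energy_est} I would use
\begin{equation*}
\Phi_n(t)=E_n(t)+\delta\Big(\la\partial_t u_n(t),u_n(t)\ra+\int_\Omega G(u_n(t))\,\tdx\Big),\qquad G(s)=\int_0^s\tau\sigma(\tau)\,\td\tau\ge0,
\end{equation*}
with $\delta>0$ small; the term $\int_\Omega G(u_n)$ is included so that its $t$-derivative is $\la\sigma(u_n)\partial_t u_n,u_n\ra$, which cancels the displacement-dependent cross term produced by $\tfrac{d}{dt}\la\partial_t u_n,u_n\ra$. Using the projected equation, \eqref{S1}, \eqref{F2}, Poincar\'e and $\lambda<\lambda_1$, for $\delta$ small one gets $\tfrac{d}{dt}\Phi_n\le-c_0\|\xi_{u_n}\|_\H^2+C(1+\|\phi\|^2)$ with $c_0>0$ independent of the data, and since also $\Phi_n\ge-C(1+\|\phi\|^2)$, integrating gives $\int_0^t\|\xi_{u_n}(s)\|_\H^2\,\td s\le c_0^{-1}\big(Q(\|\xi_{u_n}(0)\|_\H)+C(1+\|\phi\|^2)(1+t)\big)$. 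Averaging over $[t/2,t]$, for $t=T_1$ with $T_1\le\tilde Q(\|\xi_{u_n}(0)\|_\H)$ there is $s_0\in[T_1/2,T_1]$ with $\|\xi_{u_n}(s_0)\|_\H^2\le\rho_0$ for some $\rho_0$ depending only on $\|\phi\|$; then $E_n(s_0)\le Q(\|\phi\|)$, and as $E_n$ is nonincreasing, $\|\xi_{u_n}(t)\|_\H^2\le Q(\|\phi\|)$ for all $t\ge T_1$, while for $t\le T_1$ the energy bound gives $\|\xi_{u_n}(t)\|_\H^2\le Q(\|\xi_{u_n}(0)\|_\H)$. Combining the two ranges and absorbing the bounded factor $e^{T_1}$ into $Q$ yields $\|\xi_{u_n}(t)\|_\H\le Q(\|(u_0,u_1)\|_\H)e^{-t}+Q(\|\phi\|)$; passing to the limit with weak-$*$ lower semicontinuity gives \eqref{weak_energy_est}.
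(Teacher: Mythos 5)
Your proposal is correct, and its skeleton (Faedo--Galerkin, the energy identity tested with $\partial_t u_n$, weak-$*$ extraction plus Aubin--Lions, and a perturbed-energy dissipative estimate) coincides with the paper's. The one step where you genuinely diverge is the passage to the limit in the damping term, which you rightly single out as the crux. The paper exploits the chain-rule identity $\sigma(u^n)\partial_t u^n=\partial_t\Sigma(u^n)$: from $u^n\to u$ in $L^5$ and the growth of $\Sigma$ it gets $\Sigma(u^n)\rightharpoonup\Sigma(u)$ in $L^{6/5}((0,T)\times\Omega)$, and then the distributional time derivatives converge, so $\sigma(u^n)\partial_t u^n\rightharpoonup\sigma(u)\partial_t u$ weakly in $L^{6/5}$ without ever touching the dissipation integral. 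You instead factor $\sigma(u_n)\partial_t u_n=\bigl(\sqrt{\sigma(u_n)}\,\partial_t u_n\bigr)\sqrt{\sigma(u_n)}$ and run a weak--strong product argument, using the $L^2$ bound on $\sqrt{\sigma(u_n)}\,\partial_t u_n$ coming from $\int\!\!\int\sigma(u_n)|\partial_t u_n|^2<\infty$ together with Vitali for $\sqrt{\sigma(u_n)}\psi$. Both work; the paper's route is shorter and needs only the energy-space bounds, while yours has the side benefit of producing $\sqrt{\sigma(u)}\,\partial_t u\in L^2$ directly as a weak limit, which immediately gives the membership $\sigma(u)|\partial_t u|^2\in L^1$ required by Definition~\ref{def_weak} (the identification of that weak limit should be spelled out as pairing $\partial_t u_n$ against the strongly convergent $\sqrt{\sigma(u_n)}\chi$, which is what you gesture at). For the dissipative estimate the paper merely cites the standard multiplier $u^n+\alpha\partial_t u^n$; your version with the extra corrector $\int_\Omega G(u_n)$, $G'(s)=s\sigma(s)\ge 0$, which cancels the cross term $\langle\sigma(u_n)\partial_t u_n,u_n\rangle$ exactly rather than estimating it by Cauchy--Schwarz, followed by time-averaging and monotonicity of $E_n$, is a legitimate and fully detailed substitute. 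Your observation that Theorem~\ref{priori} is not needed for existence, only later for uniqueness and the energy equality, is also consistent with the paper.
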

\begin{proof}
    The existence part will be proved via the Faedo-Galerkin method. We know that there exists a complete orthonormal basis of $L^2(\Omega)$, $\{e_i\}_{i\in \mathbb{N}}$ made of eigenvectors of $-\Delta$ with Dirichlet boundary condition,
    $$-\Delta e_i=\lambda_ie_i,~~ i\in \mathbb{N},$$
    where the eigenvalues $\lambda_1$, $\lambda_2$, $\cdots$ are increasing and tend to $+\infty$. It's clear that $e_i$ is smooth enough since $\Omega$ is smooth.

    Let $P_n$ be the orthogonal projection in $L^2(\Omega)$ onto the span of $\{e_1,e_2,...,e_n\}$. Considering the following finite dimensional approximate problem
    \begin{equation}\label{eq_fdap}
        \begin{cases}
            u^n_{tt}+P_n (\sigma(u^n)u^n_t)-\Delta u^n+P_nf(u^n)=\phi_n=P_n \phi, \\
            u^n(0)=P_nu_0,u^n_t(0)=P_nu_1,
        \end{cases}
    \end{equation}
    which is actually an ordinary differential system.

    Since $\sigma\in C^1(\R)$ and $f\in C^2(\R)$, by Picard's Existence Theorem, this system has a local classical solution
    $$u^n(t)=\sum_{i=1}^{n}a_{in}(t)e_i.$$
    Multiplying \eqref{eq_fdap} by $u^n_t(t)$ and integrate over $\Omega$, we get
    \begin{equation}\label{eq2.7}
        \frac{\td}{\td t}E(u^n(t))+\int_\Omega \sigma(u^n) (u^n_t)^2\td x=0,
    \end{equation}
    where the energy functional
    $$E(u^n(t))=\frac12\|u^n_t\|^2+\frac12\|\nabla u^n\|^2+\int_\Omega F(u^n)\td x-\int_\Omega \phi u^n\td x.$$
    Integrating \eqref{eq2.7} from $0$ to $t$, we obtain
    \begin{equation}\label{eq2.8}
        E(u^n(t))+\int_0^t \int_\Omega \sigma(u^n) (u^n_t)^2\td x\td s=E(u^n(0)).
    \end{equation}
    The growth condition \eqref{F1} and dissipative condition \eqref{F2} implies that
    $$C_1(\|u^n_t(t)\|^2+\|\nabla u^n(t)\|^2-C_2)\le  E(u^n(t))\le C_2\left(\|u^n_t(t)\|^2+\|\nabla u^n(t)\|^2+1\right)^{p/2}.$$
    Then we have the energy estimate
    \begin{equation}\label{eq2.9}
        \|u^n_t(t)\|^2+\|\nabla u^n(t)\|^2+\int_0^t\int_\Omega \sigma(u^n) (u^n_t)^2\td x \td s \le C_3(E(u^n(0))+1),
    \end{equation}
    which implies that the local solution can be extended to any finite time interval.

    By virtue of \eqref{S2}, we have $\sigma^{1/2}(u^n) \in L^{\infty}(0,T;L^{3}(\Omega))$.
    Applying H\"older inequality, we obtain that $\sigma(u^n)u^n_t\in L^{6/5}([0,T]\times\Omega)$.

    Without loss of generality, we can assume that
    \begin{align*}
         & u^n\to u \text{ weakly star in } L^\infty(0,T;H^1_0(\Omega)),      \\
         & u^n_t\to u_t \text{ weakly start in } L^{\infty}(0,T;L^2(\Omega)), \\
         & u^n\to u \text{ in } L^5((0,T)\times\Omega).                       
    \end{align*}
    According to growth condition \eqref{S2} and \eqref{F1}, we have
    $$\Sigma(u^n)\to \Sigma(u),~~f(u^n)\to f(u) \text{~~in~~} L^{1}((0,T)\times\Omega),$$
    and
    $$\Sigma(u^n)\wto \Sigma(u),~~~f(u^n)\wto f(u) \text{~~in~~} L^{6/5}((0,T)\times\Omega).$$
    Since $\sigma(u^n)u^n_t=\dfrac{\td }{\td t}\Sigma(u^n)$ and $\sigma(u)u_t=\dfrac{\td }{\td t}\Sigma(u)$ in the sense of distribution, then $\sigma(u^n)u^n_t \to \sigma(u)u_t$ weak in $L^{6/5}((0,T)\times\Omega)$.

    Passing to the limit as $n\to \infty$ in equation \eqref{eq_fdap}, we can obtain that the limit function $u$ is a global weak solution of equation \eqref{eq1}.

    The dissipative estimate \eqref{weak_energy_est} can be deduced from multiplying \eqref{eq_fdap} by $u^n+\al u^n_t$ for some sufficiently small constant $\al>0$, which is a standard approach( see \cite{pata06} for instance), so we omit the proof here.
\end{proof}

\subsection{Energy equality}
As mentioned in Remark \eqref{rem_fu}, the a priori space-time estimate \eqref{space-time} implies that $f(u)\sigma^{-\frac12}(u)\in L^{2}((0,T)\times\Omega)$. Then taking into account $\sigma^{\frac12}(u)u_t\in L^{2}((0,T)\times\Omega)$, the product
$$f(u)u_t=f(u)\sigma^{-\frac12}(u) \sigma^{\frac12}(u)u_t$$
belongs to $L^{1}((0,T)\times\Omega)$, which means that multiplying equation \eqref{eq1} by $u_t$ will make sence.
\begin{thm}
    Suppose that all assumptions of Theorem \ref{priori} hold. Then for any weak solution $u(t)$ of problem \eqref{eq1}, the energy functional
    $$E(u(t))=\frac12\|u_t(t)\|^2+\frac12\|\nabla u(t)\|^2+\int_\Omega (F(u)-\phi u)\td x$$
    is absolutely continuous on $\R^{+}$ and the following energy equality
    \begin{equation}
        \frac{\td}{\td t}E(u(t))+\int_\Omega \sigma(u)u_t^2\td x=0
    \end{equation}
    holds almost everywhere on $\R^{+}$.
\end{thm}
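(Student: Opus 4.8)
The plan is to prove the energy equality by a regularization-in-time argument, since the weak solution is not regular enough to multiply the equation directly by $u_t$ in a pointwise sense, but the products appearing are all in $L^1((0,T)\times\Omega)$ thanks to the a priori estimate and Remark \ref{rem_fu}. First I would record the key integrability facts: $\xi_u=(u,u_t)\in L^\infty(0,T;\H)$, $\sigma^{1/2}(u)u_t\in L^2((0,T)\times\Omega)$ from the definition of weak solution, and $f(u)\sigma^{-1/2}(u)\in L^2((0,T)\times\Omega)$ from Remark \ref{rem_fu}; consequently $f(u)u_t\in L^1((0,T)\times\Omega)$ and $\sigma(u)u_t^2\in L^1((0,T)\times\Omega)$. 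Also $u_{tt}=\Delta u-\sigma(u)u_t-f(u)+\phi\in L^1(0,T;L^1(\Omega))+H^{-1}$, so $u_t$ has a weak time-derivative; more precisely $u\in L^\infty(0,T;H^1_0)$ with $u_t\in L^\infty(0,T;L^2)$ and, say, $u_{tt}\in L^1(0,T;H^{-1}+L^{6/5})$, which is enough to give $u\in C([0,T];L^2)$ and $u_t\in C([0,T];H^{-1})$ after modification on a null set.

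Next, the technical heart: I would mollify in time. Let $\rho_\eta$ be a standard mollifier and set $u^\eta=\rho_\eta * u$ (extending $u$ suitably near $t=0,T$, or working on compact subintervals $[\delta,T-\delta]$ first). Then $u^\eta$ is smooth in $t$, valued in $H^1_0$, satisfies the mollified equation
\begin{equation*}
u^\eta_{tt}+\rho_\eta*(\sigma(u)u_t)-\Delta u^\eta+\rho_\eta*f(u)=\phi,
\end{equation*}
and I may legitimately pair this with $u^\eta_t$ and integrate over $[\tau_1,\tau_2]\times\Omega$. The linear terms produce exactly $\frac12\|u^\eta_t\|^2+\frac12\|\nabla u^\eta\|^2-\langle\phi,u^\eta\rangle$ evaluated at the endpoints. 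For the nonlinear terms I would use the commutator/Friedrichs-lemma type convergences: $\rho_\eta*(\sigma(u)u_t)\to\sigma(u)u_t$ and $\rho_\eta*f(u)\to f(u)$ strongly in $L^1((\tau_1,\tau_2)\times\Omega)$, while $u^\eta_t\to u_t$ boundedly a.e. (or strongly in $L^2$ on compacta), so $\int\int \rho_\eta*(\sigma(u)u_t)\,u^\eta_t\to\int\int\sigma(u)u_t^2$ and $\int\int\rho_\eta*f(u)\,u^\eta_t\to\int\int f(u)u_t=\int\int \frac{d}{dt}F(u)=\int F(u(\tau_2))-\int F(u(\tau_1))$. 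Passing $\eta\to0$ then yields
\begin{equation*}
\Big[\tfrac12\|u_t\|^2+\tfrac12\|\nabla u\|^2+\int_\Omega(F(u)-\phi u)\,\td x\Big]_{\tau_1}^{\tau_2}+\int_{\tau_1}^{\tau_2}\!\!\int_\Omega\sigma(u)u_t^2\,\td x\,\td t=0,
\end{equation*}
valid for a.e. $\tau_1<\tau_2$; since the right-hand integrand is in $L^1$, the bracketed quantity $E(u(t))$ has an absolutely continuous representative, and differentiating gives the stated energy equality a.e. on $\R^+$.

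The main obstacle I expect is justifying the limit in the damping term $\int\int\rho_\eta*(\sigma(u)u_t)\,u^\eta_t$: neither $\sigma(u)u_t$ nor $u_t$ lies in a space dual to the other in an obvious way (we only know $\sigma(u)u_t\in L^{6/5}$ and $u_t\in L^2$, whose product need not be integrable in general — integrability comes only from the quadratic form $\sigma(u)u_t^2$). The clean way around this is to not split the product but rather to note $\int\int\rho_\eta*(\sigma(u)u_t)\cdot u^\eta_t = \int\int(\sigma(u)u_t)\cdot(\rho_\eta*u^\eta_t)$ up to a commutator that vanishes, and to handle the self-dual structure via the substitution $w=\Sigma(u)$ (so $w_t=\sigma(u)u_t$), writing $\sigma(u)u_t^2 = (\sigma^{1/2}(u)u_t)^2$ and using that $\sigma^{1/2}(u)u_t\in L^2$; one approximates $\sigma^{1/2}(u)u_t$ directly and uses lower semicontinuity plus a matching upper bound from the approximate energy identities \eqref{eq2.8}–\eqref{eq2.9} of the Galerkin scheme. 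A second, lesser difficulty is the behavior at $t=0$, handled by the continuity $u_t\in C([0,T];H^{-1})$ together with weak lower semicontinuity of the norms to get the initial values right; but the a priori estimate \eqref{space-time} is precisely what removes all the genuine obstructions here, so beyond these technical care-points the argument is routine.
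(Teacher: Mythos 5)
Your list of integrability facts is the right starting point, and you have correctly located the crux: the products $\sigma(u)u_t\cdot u_t$ and $f(u)\cdot u_t$ are integrable only through the factorizations $(\sigma^{1/2}(u)u_t)^2$ and $(f(u)\sigma^{-1/2}(u))\cdot(\sigma^{1/2}(u)u_t)$, not through any duality between $\sigma(u)u_t\in L^{6/5}$ and $u_t\in L^2$. But your proposed resolution of that crux does not close. Moving the mollifier to the other factor, $\int\!\!\int\rho_\eta*(\sigma(u)u_t)\,u_t^\eta=\int\!\!\int(\sigma(u)u_t)(\rho_\eta*u_t^\eta)$, leaves you with exactly the same non-dual pair; and ``approximating $\sigma^{1/2}(u)u_t$ directly'' runs into the commutator between $\rho_\eta*(\sigma^{1/2}(u)u_t)$ and $\sigma^{1/2}(u^\eta)u_t^\eta$, which you do not control. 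Your fallback --- lower semicontinuity plus the matching upper bound from the Galerkin identities \eqref{eq2.8}--\eqref{eq2.9} --- only applies to solutions obtained as Galerkin limits, whereas the theorem is asserted for \emph{every} weak solution (uniqueness is proved only afterwards), so this route cannot deliver the stated result. The same unaddressed difficulty affects your limit $\int\!\!\int\rho_\eta*f(u)\,u_t^\eta\to\int\!\!\int f(u)u_t$: $u_t^\eta$ is not bounded a.e. (it is only bounded in $L^\infty(0,T;L^2)$), and $f(u)\in L^{6/5}$ pairs with $u_t\in L^2$ only after reweighting.

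The paper's mechanism, which is the missing idea, is to renormalize the equation \emph{before} pairing: since $\sigma\ge\sigma_0>0$, one multiplies $u_{tt}-\Delta u=\phi-f(u)-\sigma(u)u_t$ by the bounded function $\sigma^{-1/2}(u)$, whereupon every term on the right-hand side --- $\phi\,\sigma^{-1/2}(u)$, $f(u)\sigma^{-1/2}(u)$ (by Remark \ref{rem_fu}), and $\sigma^{1/2}(u)u_t$ --- lies in $L^2((0,T)\times\Omega)$. The admissible test function is then $\sigma^{1/2}(u)u_t\in L^2((0,T)\times\Omega)$, so every product in the resulting identity is a product of two $L^2$ functions, and the unweighted identity $\int_\Omega(u_{tt}-\Delta u)u_t=\int_\Omega(\phi u_t-f(u)u_t-\sigma(u)u_t^2)$ is recovered legitimately; the chain-rule identities for $\tfrac12\|u_t\|^2+\tfrac12\|\nabla u\|^2$ and for $F(u)$ then follow by the standard smoothing argument. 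If you wish to keep your time-mollification framework you must build this $\sigma^{\pm1/2}$ weighting into it explicitly; as written, the proposal has a genuine gap at precisely the step you flagged.
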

\begin{proof}
    Since $\sigma(s)\ge \sigma_0>0$, thus for any weak solution $u(t)$ of problem \eqref{eq1}, we have $\sigma^{-\frac12}(u)\in L^{\infty}([0,T]\times\Omega)$.

    On the other hand, we can rewrite equation \eqref{eq1} as
    \begin{equation}\label{re_eq}
        u_{tt}-\Delta u=\phi-f(u)-\sigma(u)u_t.
    \end{equation}
    Since $u\in L^{\infty}(0,T;H^1_0(\Omega))$, by virtue of \eqref{S2} and \eqref{F1}, we have
    $$\sigma^{\frac12}(u) \in L^{\infty}(0,T;L^{3}(\Omega)) \and f(u)\in L^{\infty}(0,T;L^{\frac65}(\Omega)).$$
    Taking account into $\sigma^{1/2}(u)u_t \in L^2([0,T]\times\Omega)$, we obtain that $\sigma(u)u_t\in L^{6/5}([0,T]\times\Omega)$. It concluded that the right side terms of equality \eqref{re_eq} belong to $L^{6/5}([0,T]\times\Omega)$, which yields that we can multiply both sides of equality \eqref{re_eq} by $\sigma^{-\frac12}(u)$ to obtain that
    \begin{equation}\label{eq_sigma}
        (u_{tt}-\Delta u)\sigma^{-\frac12}(u)=\phi\sigma^{-\frac12}(u)-f(u)\sigma^{-\frac12}(u)-\sigma^{\frac12}(u)u_t.
    \end{equation}
    Recall that from Remark \ref{rem_fu} that $f(u)\sigma^{-\frac12}(u)\in L^{2}([0,T]\times\Omega)$, thus all three terms on the right side of equality \eqref{eq_sigma} belong to $L^{2}([0,T]\times\Omega)$. This fact allows us to take the inner product of equality \eqref{eq_sigma} with $\sigma^{1/2}(u)u_t$, which derive that
    \begin{equation}\label{eq_ut}
        \int_\Omega (u_{tt}-\Delta u)u_t\td x=\int_\Omega (\phi u_t-f(u)u_t-\sigma(u)u_t^2)\td x.
    \end{equation}
    Approximating the function $u$ by smooth ones and arguing in a standard way (see \cite[Ch2, Lemma 4.1]{temam97} for instance), we conclude from the left hand side term that
    \begin{equation}\label{energy_left}
        \int_\Omega (u_{tt}-\Delta u)u_t\td x=\frac{\td}{\td t}\left(\frac12\|u_t\|^2+\frac12\|\nabla u\|^2\right).
    \end{equation}
    And for the right hand side terms, we have
    \begin{equation}\label{energy_right}
        \int_\Omega (\phi u_t-f(u)u_t-\sigma(u)u_t^2)\td x=\frac{\td}{\td t}\left(\int_\Omega (\phi u-F(u))\td x\right)-\int_\Omega \sigma(u)u_t^2\td x.
    \end{equation}
    Substituting above two equalities into equality \eqref{eq_ut}, we have the energy equality
    \begin{equation}
        \frac{\td}{\td t}E(u(t))+\int_\Omega \sigma(u)u_t^2\td x=0,
    \end{equation}
    holds a.e. on $\mathbb{R}^{+}$.
\end{proof}

\subsection{Norm-to-weak continuity}
To complete the proof of the well-posedness, it remains to prove some kind of continuous dependence of weak solution w.r.t the initial data. Here, we involve the so-called norm-to-weak continuity of the weak solution.
\begin{Def}[\cite{zhong}]
    Let $X$ be a Banach space and $S(t)$ be an operator semigroup on $X$. We say $S(t)$ is a norm-to-weak continuous on $X$, if $S(t_n)x_n\wto S(t)x$ where $t_n\to t$ and $x_n\to x$ in $X$.
\end{Def}
Firstly, we will establish the $L^2$-norm estimate on the difference $u^1(t)-u^2(t)$ for any two weak solutions $u^1(t)$, $u^2(t)$.
\begin{prop}\label{partial_cont}
    Let $u^1(t)$ and $u^2(t)$ be two weak solutions of \eqref{eq1} in $[0, T]\times\Omega$. Then we have the following estimate holds
    \begin{equation}\label{ieq_cd}
        \|\bar{u}(t)\|\le CE^{CT}\|\xi_{\bar{u}}(0)\|_{\H}
    \end{equation}
    where $\bar{u}(t)=u^1(t)-u^2(t)$ and $C$ may depends on $\|\xi_{u^1}(t)\|_{\H}$ and $\|\xi_{u^2}(t)\|_{\H}$.
\end{prop}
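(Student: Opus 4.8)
The plan is to estimate $\bar u$ one regularity level below the energy space, working with the equation integrated in time. Using $\sigma(u^i)u^i_t=\p_t\Sigma(u^i)$, the difference $\bar u=u^1-u^2$ satisfies, in the distributional sense,
\[
\bar u_{tt}-\Delta\bar u+\p_t\big(\Sigma(u^1)-\Sigma(u^2)\big)+\big(f(u^1)-f(u^2)\big)=0 .
\]
Put $P=\Sigma(u^1)-\Sigma(u^2)$, $g=f(u^1)-f(u^2)$ and $v(t)=\int_0^t\bar u(s)\,\td s$, so that $v\in W^{1,\infty}(0,T;H^1_0(\Om))$, $v(0)=0$, $v_t=\bar u$ and $v_{tt}=\bar u_t$; integrating the equation over $[0,t]$ turns it into
\[
v_{tt}-\Delta v=\bar u_t(0)+P(0)-P(t)-\int_0^t g(s)\,\td s=:F(t) .
\]
Since $m\le4$ and $p\le k+3\le5$ we have $F\in L^\infty(0,T;H^{-1}(\Om))$, so I can pair this identity with $v_t=\bar u$; integrating in time and invoking the standard lemma for $\tfrac{\td}{\td t}\|v_t\|^2$ and $\tfrac{\td}{\td t}\|\nabla v\|^2$ (cf. \cite[Ch2, Lemma 4.1]{temam97}) yields
\[
\tfrac12\|\bar u(t)\|^2+\tfrac12\|\nabla v(t)\|^2=\tfrac12\|\bar u(0)\|^2+\int_0^t\la F(s),\bar u(s)\ra\,\td s .
\]

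Next I would dispose of the two harmless contributions to $\int_0^t\la F,\bar u\ra\,\td s$. Because $\Sigma'=\sigma>0$, the map $\Sigma$ is strictly increasing, so $\big(\Sigma(u^1)-\Sigma(u^2)\big)(u^1-u^2)\ge0$ pointwise; hence $-\int_0^t\la P(s),\bar u(s)\ra\,\td s\le0$ and this term is simply discarded — this is precisely where the displacement dependence of the damping is absorbed, and with a favourable sign. For the time-frozen part, $\int_0^t\la\bar u_t(0)+P(0),\bar u(s)\ra\,\td s=\la\bar u_t(0)+P(0),v(t)\ra\le\|\bar u_t(0)+P(0)\|_{6/5}\,\|v(t)\|_6$; from $|\Sigma(a)-\Sigma(b)|\le C(1+|a|^m+|b|^m)|a-b|$, H\"older's inequality and $m\le4$ one obtains $\|P(0)\|_{6/5}\le C\|\nabla\bar u(0)\|$ with $C$ depending on $\|\xi_{u^i}\|_{\H}$, so this term is bounded by $\tfrac14\|\nabla v(t)\|^2+C\|\xi_{\bar u}(0)\|_{\H}^2$, the gradient part being absorbed on the left.

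The heart of the argument is the super-cubic term $-\int_0^t\big\la\int_0^s g(\tau)\,\td\tau,\bar u(s)\big\ra\,\td s$; since $f(u^i)$ need not lie in $L^2$, testing with $\bar u_t$ is not available, which is exactly why one passes to the integrated equation. I would integrate by parts in $s$, using $\bar u=v_t$,
\[
-\int_0^t\Big\la\int_0^s g\,\td\tau,\bar u(s)\Big\ra\,\td s=-\Big\la\int_0^t g\,\td\tau,v(t)\Big\ra+\int_0^t\la g(s),v(s)\ra\,\td s ,
\]
so the differentiated factor $\bar u$ is traded for $v$. From $|f(u^1)-f(u^2)|\lesssim(1+|u^1|^{p-1}+|u^2|^{p-1})|\bar u|$ and H\"older's inequality, $\|g(s)\|_{6/5}\le C\phi(s)\|\bar u(s)\|$ with $\phi(s)=1+\|u^1(s)\|_{3k+6}^{p-1}+\|u^2(s)\|_{3k+6}^{p-1}$, and the crucial point is that $p\le k+3$ forces both $3(p-1)\le3k+6$ and $p-1\le k+2$, so the space-time estimate \eqref{space-time} of Theorem \ref{priori} gives $\phi\in L^1(0,T)$, with norm depending only on $\|\xi_{u^1}(t)\|_{\H}$ and $\|\xi_{u^2}(t)\|_{\H}$. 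Hence $\int_0^t\la g,v\ra\,\td s\lesssim\int_0^t\phi(s)\big(\|\bar u(s)\|^2+\|\nabla v(s)\|^2\big)\td s$, while by Cauchy--Schwarz $\big|\la\int_0^t g\,\td\tau,v(t)\ra\big|\le\tfrac18\|\nabla v(t)\|^2+C\|\phi\|_{L^1(0,T)}\int_0^t\phi(s)\big(\|\bar u(s)\|^2+\|\nabla v(s)\|^2\big)\td s$. Setting $Z(t)=\|\bar u(t)\|^2+\|\nabla v(t)\|^2$ and absorbing the remaining $\|\nabla v(t)\|^2$-terms on the left, one reaches $Z(t)\le C\|\xi_{\bar u}(0)\|_{\H}^2+C\int_0^t\psi(s)Z(s)\,\td s$ for some $\psi\in L^1(0,T)$; Gr\"onwall's inequality together with $\|\bar u(t)\|^2\le Z(t)$ then gives \eqref{ieq_cd}, the factor $Ce^{CT}$ absorbing $\exp(C\|\psi\|_{L^1(0,T)})$.

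I expect the genuine obstacle to be exactly this last estimate on the super-cubic difference — namely checking $\phi\in L^1(0,T)$, which is where the a priori space-time bound of Theorem \ref{priori} and the range restriction $p\le k+3$ are indispensable; the remaining steps are routine once the time-integrated test function is adopted and the monotonicity of $\Sigma$ is used to drop the displacement-damping contribution with the right sign.
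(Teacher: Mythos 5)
Your proposal is correct and follows essentially the same route as the paper: you test the time-integrated equation with $\bar u=v_t$, drop the damping difference via the monotonicity of $\Sigma$, shift the time derivative off $\bar u$ in the nonlinear term by an integration by parts, and control the resulting coefficient $1+\|u^1\|_{3k+6}^{p-1}+\|u^2\|_{3k+6}^{p-1}$ in $L^1(0,T)$ through the space-time estimate \eqref{space-time} and $p\le k+3$, before closing with Gr\"onwall. The only differences from the paper's argument are presentational (you work with the squared energy identity rather than $H^{-1}$-norm bounds and a supremum argument), so no further comment is needed.
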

\begin{proof}
    For any weak solutions $u^1(t)$ and $u^2(t)$, denote $\bar{u}=u^1-u^2$ and
    $$R=\max_{[0, T]}(\|\xi_{u^1}(t)\|_{\H}+\|\xi_{u^2}(t)\|_{\H})+1.$$

    Defining $w^i(t)= \int_0^t u^i(s)\td s$, $i=1, 2$. Denote $\bar{w}=w^1-w^2$ and $\xi_{\bar{w}}(t)=(\bar{w}(t), \bar{w}_t(t))=(\bar{w}(t), \bar{u}(t))$. Integrating equation \eqref{eq1} for two weak solutions on $[0, t]$ and taking the difference yields
    \begin{equation}\label{barw}
        \partial_t^2 \bar{w}+\Sigma(u^1)-\Sigma(u^2)-\Delta \bar{w}=F+G,
    \end{equation}
    where
    \begin{equation*}
        F(t)=\int_0^t[f(u^2(\tau))-f(u^1(\tau))]\td \tau \and G=\Sigma(u^1(0))-\Sigma(u^2(0))+\partial_t \bar{u}(0).
    \end{equation*}
    Note that, on account of \eqref{S2} and \eqref{F1}, all the terms of \eqref{barw} belong at least to $L^2(0, T;H^{-1})$. Hence, their product with $\partial_t\bar{w}=\bar{u}\in L^{\infty}(0,T;H^1_0(\Omega))$ is well defined. Taking this product, and observing
    that $\la\Sigma(u^1)-\Sigma(u^2),\bar{u} \ra\ge 0$, we get
    \begin{equation}
        \frac{\td}{\td t}\|\xi_{\bar{w}}(t)\|_{\H}^2\le 2\frac{\td}{\td t}\la F,\bar{w}\ra+2\frac{\td}{\td t}\la G,\bar{w}\ra-2\la\partial_tF,\bar{w}\ra.
    \end{equation}
    Integrating on $[0, t]$, we are led to
    \begin{equation}\label{ebw}
        \begin{aligned}
            \|\xi_{\bar{w}}(t)\|_{\H}^2 & \le \|\xi_{\bar{w}}(0)\|_{\H}^2+2\la F(t),\bar{w}(t)\ra+2\la G,\bar{w}(t)\ra-2\int_{0}^{t}\la\partial_{\tau}F(\tau),\bar{w}(\tau)\ra\td \tau \\
                                        & \le \|\bar{u}(0)\|^{2}+2\|F(t)\|_{H^{-1}}\|\xi_{\bar{w}}(t)\|_{\H}+2\|G\|_{H^{-1}}\|\xi_{\bar{w}}(t)\|_{\H}                                  \\
                                        & +2\int_{0}^{t}\|\partial_{\tau}F(\tau)\|_{H_{-1}}\|\xi_{\bar{w}}(\tau)\|_{\H} \td \tau.
        \end{aligned}
    \end{equation}
    Using now the growth restrictions \eqref{S2} and \eqref{F1} on $\sigma$ and $f$, we easily see that
    \begin{equation}\label{g-1}
        2\|G\|_{H^{-1}} \le C(R)\|\xi_{\bar{u}}(0)\|_{\H},
    \end{equation}
    and
    \begin{equation}
        \begin{aligned}
            2\|F(t)\|_{H^{-1}} & \le 2\int_0^t\|f(u^2(\tau))-f(u^1(\tau))\|_{H^{-1}}\td \tau                                                 \\
                               & \le C\int_0^t\|f(u^2(\tau))-f(u^1(\tau))\|_{L^{6/5}(\Omega)}\td \tau                                        \\
                               & \le C\int_0^t(1+\|u^1(\tau)\|_{3p-3}^{p-1}+\|u^2(\tau)\|_{3p-3}^{p-1})\|\xi_{\bar{w}}(\tau)\|_{\H}\td \tau.
        \end{aligned}
    \end{equation}
    Since $p\le k+3$,
    \begin{equation}
        2\|F(t)\|_{H^{-1}} \le \int_0^t C_1(1+\|u^1(\tau)\|_{3k+6}^{k+2}+\|u^2(\tau)\|_{3k+6}^{k+2})\|\xi_{\bar{w}}(\tau)\|_{\H}\td \tau.
    \end{equation}
    Denote $H(\tau)=C_1(1+\|u^1(\tau)\|_{3k+6}^{k+2}+\|u^2(\tau)\|_{3k+6}^{k+2})$, then
    \begin{equation}
        2\|F(t)\|_{H^{-1}}\le \int_0^t H(\tau)\|\xi_{\bar{w}}(\tau)\|_{\H}\td \tau.
    \end{equation}
    By an analogy analysis,
    \begin{equation}\label{pF-1}
        2\|\partial_{\tau}F(\tau)\|_{H^{-1}} =2 \|f(u^2(\tau))-f(u^1(\tau))\|_{H^{-1}} \le H(\tau)\|\xi_{\bar{w}}(\tau)\|_{\H}.
    \end{equation}
    Substituting \eqref{g-1}-\eqref{pF-1} into \eqref{ebw}, we have
    \begin{equation}
        \begin{aligned}
            \|\xi_{\bar{w}}(t)\|_{\H}^2 & \le \|\xi_{\bar{w}}(0)\|_{\H}^2+\int_0^t H(\tau)\|\xi_{\bar{w}}(\tau)\|_{\H}\td \tau\|\xi_{\bar{w}}(t)\|_{\H}   \\
                                        & +C(R)\|\xi_{\bar{u}}(0)\|_{\H}\|\xi_{\bar{w}}(t)\|_{\H}+\int_0^t H(\tau)\|\xi_{\bar{w}}(\tau)\|_{\H}^2\td \tau.
        \end{aligned}
    \end{equation}
    Denote $\Phi(t)=\sup_{0  \le \tau\le t}\|\xi_{\bar{w}}(\tau)\|_{\H}$ and take the supermum w.r.t. time on the both sides of above inequality, we have
    \begin{equation}
        \Phi(t)^2  \le \|\xi_{\bar{w}}(0)\|_{\H}\Phi(t)+2\int_0^t H(\tau)\|\xi_{\bar{w}}(\tau)\|_{\H}\td \tau\Phi(t)+C(R)\|\xi_{\bar{u}}(0)\|_{\H}\Phi(t).
    \end{equation}
    Together with $\|\xi_{\bar{w}}(0)\|_{\H}\le C \|\xi_{\bar{u}}(0)\|_{\H}$ yields
    \begin{equation}
        \|\xi_{\bar{w}}(t)\|_{\H}  \le \Phi(t) \le +2\int_0^t H(\tau)\|\xi_{\bar{w}}(\tau)\|_{\H}\td \tau +C(R)\|\xi_{\bar{u}}(0)\|_{\H}.
    \end{equation}
    From the Gronwall lemma we conclude that
    \begin{equation}
        2\int_0^t H(\tau)\|\xi_{\bar{w}}(\tau)\|_{\H}\td \tau\le Q(R)\left(e^{2\int_0^t H(\tau)\td \tau}-1\right)\|\xi_{\bar{u}}(0)\|_{\H}.
    \end{equation}
    Therefore
    \begin{equation}
        \|\xi_{\bar{w}}(t)\|_{\H}\le C(R) e^{2\int_0^t H(\tau)\td \tau}\|\xi_{\bar{u}}(0)\|_{\H}.
    \end{equation}
    Taking account into \eqref{space-time}, we have
    \begin{equation}
        \|\xi_{\bar{w}}(t)\|_{\H}\le C(R,T)\|\xi_{\bar{u}}(0)\|_{\H}.
    \end{equation}
\end{proof}
As a conclusion, we can obtain the uniqueness of weak solution.
\begin{cor}[Uniqueness]
    Furthermore, if $\|\xi_{\bar{u}}(0)\|_{\H}=0$, then the two solutions is actually identity, which means that the weak solution is unique.
\end{cor}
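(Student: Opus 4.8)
The plan is to read the corollary off directly from Proposition \ref{partial_cont}, since the entire analytic content of uniqueness has already been absorbed into that estimate. Fix $T>0$ and let $u^1,u^2$ be two weak solutions of \eqref{eq1} on $[0,T]\times\Omega$ with the same initial data, so that $\xi_{\bar u}(0)=\big(u^1(0)-u^2(0),\,u^1_t(0)-u^2_t(0)\big)=0$ in $\H$, i.e. $\|\xi_{\bar u}(0)\|_{\H}=0$. Plugging this into \eqref{ieq_cd} yields $\|\bar u(t)\|\le C E^{CT}\cdot 0 = 0$ for every $t\in[0,T]$, hence $u^1(t)=u^2(t)$ in $L^2(\Omega)$ for all $t\in[0,T]$.

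Since $T>0$ was arbitrary, this identity propagates to all of $\R^{+}$, so $u^1\equiv u^2$ as elements of $L^\infty(0,T;H^1_0(\Omega))$ for every $T$. Differentiating in time in the sense of distributions then gives $u^1_t\equiv u^2_t$, and therefore $\xi_{u^1}(t)=\xi_{u^2}(t)$ in $\H$ for all $t\ge 0$: the weak solution is unique. The passage from ``$\|\bar u(t)\|\equiv 0$'' to coincidence in the full phase space is the only point to note, and it is immediate once one recalls that weak solutions satisfy $\xi_u\in L^\infty(0,T;\H)$.

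There is no real obstacle at this stage. The genuinely delicate steps have already been carried out in Proposition \ref{partial_cont}: namely, that the terms of \eqref{barw} all lie in $L^2(0,T;H^{-1})$ so that their product with $\bar u\in L^\infty(0,T;H^1_0(\Omega))$ is legitimate, that the monotonicity inherited from \eqref{S1} provides $\langle\Sigma(u^1)-\Sigma(u^2),\bar u\rangle\ge 0$, and — crucially — that the space-time estimate \eqref{space-time} controls $\|\partial_tF\|_{H^{-1}}$ and $\|F\|_{H^{-1}}$ through the integrable quantity $H(\tau)=C_1\big(1+\|u^1(\tau)\|_{3k+6}^{k+2}+\|u^2(\tau)\|_{3k+6}^{k+2}\big)$, making the Gronwall argument admissible for the super-cubic exponent range. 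Given all that, the corollary is a one-line specialization.
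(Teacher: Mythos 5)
Your argument is correct and is exactly the route the paper intends: the corollary is stated as an immediate specialization of Proposition \ref{partial_cont} (the paper gives no separate proof), and setting $\|\xi_{\bar u}(0)\|_{\H}=0$ in \eqref{ieq_cd} forces $\bar u\equiv 0$, after which the coincidence of the time derivatives follows by differentiating in the sense of distributions, as you note. Nothing further is needed.
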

We are now ready to define the solution semigroup $S(t):\H\rightarrow\H $ associated with problem \eqref{eq1}:
\begin{eqnarray}
    S(t):\xi_u(0)\rightarrow \xi_u(t)
\end{eqnarray}
where $u(t)$ is the unique weak solution of \eqref{eq1}.

Now, let's prove norm-to-weak continuity of the solution semigroup $S(t)$ on $\H$.
\begin{thm}[Norm-to-weak continuity]\label{ntw_cont}
    The semigroup $S(t)$ is norm-to-weak continuous on $\H$ for any $t>0$.
\end{thm}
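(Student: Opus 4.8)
The plan is to unwind the definition: given sequences $(u_0^n,u_1^n)\to(u_0,u_1)$ in $\mathcal H$ and $t_n\to t$ (we may assume all $t_n,t\in[0,T]$ for a fixed $T$), I must show that the associated weak solutions satisfy $\xi_{u^n}(t_n)\rightharpoonup\xi_u(t)$ in $\mathcal H$, where $u$ is the unique (by the uniqueness corollary) weak solution with data $(u_0,u_1)$. The scheme is compactness plus uniqueness, the only delicate point being the upgrade of convergence to the pointwise-in-time level.

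First I would record uniform bounds. Since $\{(u_0^n,u_1^n)\}$ is bounded in $\mathcal H$, the dissipative estimate \eqref{weak_energy_est} bounds $\xi_{u^n}$ in $L^\infty(0,T;\mathcal H)$ and, via the energy equality, bounds $\int_0^T\int_\Omega\sigma(u^n)(u^n_t)^2\,\td x\,\td t$, uniformly in $n$. As in Section 3.1, \eqref{S2} gives $\sigma(u^n)u^n_t$ bounded in $L^{6/5}((0,T)\times\Omega)$ and \eqref{F1} gives $f(u^n)$ bounded in $L^\infty(0,T;L^{6/5}(\Omega))$, so from the equation $u^n_{tt}$ is bounded in $L^{6/5}(0,T;H^{-1}(\Omega))$. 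Extracting a subsequence, $u^n\rightharpoonup^* v$ in $L^\infty(0,T;H^1_0)$, $u^n_t\rightharpoonup^* v_t$ in $L^\infty(0,T;L^2)$; by the Aubin--Lions--Simon lemma $u^n\to v$ in $C([0,T];L^2(\Omega))$, and using the equicontinuity $\|u^n_t(\tau_1)-u^n_t(\tau_2)\|_{H^{-1}}\le|\tau_1-\tau_2|^{1/6}\|u^n_{tt}\|_{L^{6/5}(0,T;H^{-1})}$ together with the compact embedding $L^2(\Omega)\hookrightarrow H^{-1}(\Omega)$, also $u^n_t\to v_t$ in $C([0,T];H^{-1}(\Omega))$. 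In particular $u^n\to v$ a.e.\ on $(0,T)\times\Omega$, so $f(u^n)\to f(v)$ and $\Sigma(u^n)\to\Sigma(v)$ in $L^1$ (growth bounds plus the uniform estimates), hence weakly in $L^{6/5}$; since $\sigma(u^n)u^n_t=\p_t\Sigma(u^n)$ in the sense of distributions this forces $\sigma(u^n)u^n_t\rightharpoonup\sigma(v)v_t$ in $L^{6/5}((0,T)\times\Omega)$. Passing to the limit in the weak formulation, and using $u_0^n\to u_0$, $u_1^n\to u_1$ to identify the initial data, $v$ is a weak solution of \eqref{eq1} with data $(u_0,u_1)$; by uniqueness $v=u$.

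Next I would read off the pointwise-in-time statement. From $u^n\to u$ in $C([0,T];L^2)$ and $t_n\to t$,
$$\|u^n(t_n)-u(t)\|\le\|u^n(t_n)-u(t_n)\|+\|u(t_n)-u(t)\|\to0,$$
using continuity of $\tau\mapsto u(\tau)$ in $L^2$. Since $\{u^n(t_n)\}$ is also bounded in $H^1_0$, any weak-$H^1_0$ limit point of it coincides with its $L^2$-limit $u(t)$, hence $u^n(t_n)\rightharpoonup u(t)$ in $H^1_0$. Likewise $u^n_t\to u_t$ in $C([0,T];H^{-1})$ gives $u^n_t(t_n)\to u_t(t)$ in $H^{-1}$, and boundedness of $\{u^n_t(t_n)\}$ in $L^2$ upgrades this to $u^n_t(t_n)\rightharpoonup u_t(t)$ in $L^2$. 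Therefore $\xi_{u^n}(t_n)\rightharpoonup\xi_u(t)$ in $\mathcal H$ along the subsequence, and since the limit does not depend on the subsequence, the convergence holds for the full sequence — which is exactly norm-to-weak continuity.

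The hard part will be making the pointwise-in-time convergence rigorous: the $C([0,T];L^2)$ convergence of $u^n$ and the $C([0,T];H^{-1})$ convergence of $u^n_t$ must be justified through Aubin--Lions--Simon (the second relying on the $u^n_{tt}$-bound and on $L^2\hookrightarrow H^{-1}$ being compact), and one must verify that the nonlinear terms $f(u^n)$ and $\sigma(u^n)u^n_t$ genuinely pass to the limit in the distributional identity — the latter precisely through the identity $\sigma(u^n)u^n_t=\p_t\Sigma(u^n)$ exploited in the proof of Theorem \ref{weak}.
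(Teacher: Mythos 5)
Your proposal is correct, but it takes a genuinely different route from the paper's. You run the classical compactness-plus-uniqueness scheme: uniform bounds, Aubin--Lions--Simon to obtain $u^n\to v$ in $C([0,T];L^2(\Omega))$ and $u^n_t\to v_t$ in $C([0,T];H^{-1}(\Omega))$ along a subsequence, identification of $v$ by passing to the limit in the distributional formulation (with $\sigma(u^n)u^n_t=\partial_t\Sigma(u^n)$ handled exactly as in the existence proof of Theorem \ref{weak}), then uniqueness plus the subsequence trick to upgrade to the full sequence, and finally boundedness in $\mathcal H$ to convert strong convergence in the weaker topology into weak convergence in $\mathcal H$ at the times $t_n\to t$. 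The paper instead avoids any compactness extraction: the convergence $u^n\to u$ in $C([0,T];L^2(\Omega))$ comes for the full sequence, with the limit already identified, from the quantitative continuous-dependence estimate of Proposition \ref{partial_cont}; the weak $H^{-1}$-limit of $u^n_t(t)$ is read off term by term from the time-integrated equation \eqref{unt}; and the shift $t_n\to t^*$ is absorbed by a separate uniform-in-$n$ H\"older continuity estimate of $\xi_{u^n}$ in $\mathcal H^{-1}$ (a step your $C([0,T];\cdot)$ convergences replace). Your version is more self-contained and standard; the paper's version never has to re-certify the limit as a solution. On that point there is one detail you should make explicit: to invoke the uniqueness corollary you must verify that $v$ is a weak solution in the sense of Definition \ref{def_weak}, which includes the requirement $\sigma(v)|v_t|^2\in L^1([0,T]\times\Omega)$. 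This is not automatic from the distributional identity; it follows from weak lower semicontinuity of the $L^2$-norm applied to $\sqrt{\sigma(u^n)}\,u^n_t$, whose weak $L^2$-limit is identified as $\sqrt{\sigma(v)}\,v_t$ by combining the a.e.\ convergence of $u^n$ with the weak convergence of $u^n_t$ (the same device used in \eqref{eq4.16} of the asymptotic compactness proof). With that one-line addition the argument is complete.
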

\begin{proof}
    Suppose that sequences $t_n\to t^{*}\in \R^{+}$ and $z_n=(u^n_0,u^n_1)$ in $\H$ such that $z_n\to z$ in $\H$ for some $z=(u_0,u_1)\in \H$. According to the energy estimate \eqref{weak_energy_est}, we have
    $$R:=\sup_n\sup_{t\in\R^{+}}\|\xi_{u^n}(t)\|_{\H}+1<+\infty.$$
    Since $\H$ is dense in $\H^{-1}$, after a simliar approximate argument with \cite[Theorem 3.2]{zhong}, we just need to prove that $\xi_{u^n}(t^n)\wto \xi_{u}(t^{*})$ in $\H^{-1}$.
    We will devide the proof into two steps. Firstly, let prove that $\xi_{u^n}(t)$ is local H\"older continuous on time in $\H^{-1}$--norm uniformly w.r.t. $n\in \N$.
    By the fundamental theorem of calculus, we have the following Lipschitz continuity
    \begin{equation}\label{u_cont}
        \|u^n(t_n)-u^n(t)\|=\left\|\int_t^{t_n} u_t^n(\tau)\td \tau\right\|\le \left|\int_t^{t_n}\| u_t^n(\tau) \|\td\tau\right|\le  R|t^n-t|.
    \end{equation}
    For the derivative $u^n_t$, we find
    \begin{equation}
        \|u^n_t(t_n)-u^n_t(t)\|_{H^{-1}}=\left\|\int_t^{t_n} u_{tt}^n(\tau)\td \tau\right\|_{H^{-1}}\le \left|\int_t^{t_n}\| u_{tt}^n(\tau) \|_{H^{-1}}\td\tau\right|.
    \end{equation}
    To obtain a similar continuity, we need to estimate the bound of second order derivative $u^n_{tt}$ in $H^{-1}$. By equation \eqref{eq1},
    \begin{equation}
        u_{tt}^n(t)=\phi+\Delta u^n(t)-\sigma(u^n(t))u^n_t(t)-f(u^n(t)),
    \end{equation}
    then
    \begin{equation}\label{utt-1}
        \|u_{tt}^n(t)\|_{H^{-1}} \le C\|\phi\|+\|\nabla u^n(t)\|+\|\sigma(u^n(t))u^n_t(t)\|_{H^{-1}}+\|f(u^n(t))\|_{H^{-1}}.
    \end{equation}
    With regard to the damping term on the right hand side, H\"older inequality implies that
    \begin{equation}
        \begin{aligned}
            \|\sigma(u^n(t))u^n_t(t)\|_{H^{-1}} & \le C\|\sigma(u^n(t))u^n_t(t)\|_{\frac65}                           \\
                                                & \le C\|\sqrt{\sigma(u^n(t))}\|_{3}\|\sqrt{\sigma(u^n(t))}u^n_t(t)\| \\
                                                & \le C(1+\|u^n(t)\|_{6}^{\frac m2})\|\sqrt{\sigma(u^n(t))}u^n_t(t)\| \\
                                                & \le CR^{\frac{m}{2}}\|\sqrt{\sigma(u^n(t))}u^n_t(t)\|.
        \end{aligned}
    \end{equation}
    Taking account into the dissipation integrals,
    \begin{equation}\label{su}
        \int_t^{t_n}  \|\sigma(u^n(t))u^n_t(t)\|_{H^{-1}}\td\tau \le  \int_t^{t_n} CR^{\frac m2}\|\sqrt{\sigma(u^n(\tau))}u^n_t(t)\|\td\tau \le CR^{\frac{m+1}{2}}|t^n-t|^{\frac12}.
    \end{equation}
    Therefore, substituting into \eqref{utt-1}, we can obtain
    \begin{equation}\label{ut_cont}
        \|u^n_t(t_n)-u^n_t(t)\|_{H^{-1}}\le  C(R)|t^n-t|+CR^{\frac{m+1}{2}}|t^n-t|^{\frac12},
    \end{equation}
    which together with \eqref{u_cont} gives the desired continuity estimate.

    Next, we will prove that $\xi_{u^n}(t)\wto \xi_{u}(t)$ in $\H^{-1}$ for any $t\in [0,T]$ where $T>t^{*}$. According to Proposition \ref{partial_cont}, we have $u^n\to u$ in $C([0,T];L^2(\Omega))$, therefore it remains to verify the weak convergence of $u^n_t(t)$ in $H^{-1}$.

    According to equation \eqref{barw},
    \begin{equation}\label{unt}
        u_t^n(t)=w_{tt}^n(t)=\Delta w^n(t)-\int_0^t f(u^n(\tau))\td \tau-\Sigma(u^n(t))+\Sigma(u^n_0)+u_1^n+t\phi.
    \end{equation}
    Let us deal with \eqref{unt} term by term. Fisrtly, thanks to $u^n\to u$ in $C([0,T];L^2(\Omega))$ and $\{u^n\}$ be bounded in $L^{\infty}(0,T;H^1_0(\Omega))$, by a density agrument, we can obtain that
    \begin{equation}\label{dwn}
        \Delta w^n(t)  \wto \Delta w(t) \bin  H^{-1}.
    \end{equation}Since $z^n\to z$ in $\H$, by Sobolev embedding inequality, we get convergence for terms involving initial data
    \begin{equation}\label{inti}
        \Sigma(u^n_0) \to \Sigma(u_0) \bin    H^{-1} \and   u_1^n \to u_1 \bin L^2(\Omega).
    \end{equation}
    To deal with convergence of $\int_0^t f(u^n(\tau))\td \tau$ and $\Sigma(u^n(t))$, combining the $L^2$ convergence
    $$u^n\to u \bin C([0,T];L^2(\Omega))$$
    with the $H^1_0(\Om)$ uniform boundness $\sup_{n,t}\|u^n(t)\|_{H^1_0(\Om)}<+\infty$, we can deduce that $u^n\to u$ in $C([0,T];L^{\frac{11}{2}}(\Omega))$. Then for any $t<T$, by a dense agrument we can prove that
    \begin{equation}\label{f_sigma}
        \int_0^t f(u^n(\tau))\td \tau  \wto \int_0^t f(u(\tau))\td \tau \text{~~and~~} \Sigma(u^n(t))  \wto \Sigma(u(t))  \text{~~in~~}  H^{-1}.
    \end{equation}
    Combining with all the convergences \eqref{dwn}-\eqref{f_sigma}, we get
    \begin{equation}
        u^n_t(t)  \wto u_t(t) \bin  H^{-1} \text{~~for any ~~}t<T.
    \end{equation}
    Therefore,
    $$\xi_{u^n}(t^{*})\wto \xi_{u}(t^{*}) \bin H^{-1}.$$

    In conclusion, we have the weak convergence
    \begin{equation}
        \xi_{u^n}(t_n)=(\xi_{u^n}(t_n)-\xi_{u^n}(t^{*}))+\xi_{u^n}(t^{*})\wto \xi_{u}(t^{*}) \bin H^{-1}.
    \end{equation}
\end{proof}
\section{Global attractor}
In this section, we will prove the existence of global attractor for problem \eqref{eq1}. For the convenience of the reader, we now recall the definition of the global attractor, see \cite{babin92, temam97} for more details.
\begin{Def}
    Let $\{S(t)\}_{t\geq 0}$ be a semigroup on a metric space $(X,d)$.
    A subset $\mathscr{A}$ of $X$ is called a global attractor for the
    semigroup, if $\mathscr{A}$ is compact and enjoys the following properties:\\
    (1) $\mathscr{A}$ is invariant , i.e. $S(t)\mathscr{A}=\mathscr{A}$,~$\forall t\geq 0$;\\
    (2) $\mathscr{A}$ attracts all bounded sets of $X$, that is, for any bounded
    subset $B$ of $X$,
    $$ dist(S(t)B,\mathscr{A})\rightarrow 0,~~~as ~~t\rightarrow 0 ,$$
    where $dist(B,A)$ is the semidistance of two sets B and
    A:$$dist(B,A)=\sup \limits_{x\in B}\inf\limits_{y\in A}d(x,y).$$
\end{Def}
We will use the following existence criterion of global attractor for the norm-to-weak continuous semigroup.
\begin{thm}[\cite{zhong}, Theorem 4.2, Remark 4.4]
    Let $X$ be a Banach space and $\{S(t)\}_{t\ge 0}$ be a norm-to-weak continuous semigroup on $X$. Then $\{S(t)\}_{t\ge 0}$ has a global attractor in $X$, if and only if
    \begin{itemize}
        \item[(i)] $\{S(t)\}_{t\ge 0}$ has a bounded absorbing set $B_0$ in $X$;
        \item[(ii)] $\{S(t)\}_{t\ge 0}$ is asymptotically compact.
    \end{itemize}
\end{thm}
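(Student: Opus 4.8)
The plan is to prove the two implications separately, with the ``if'' direction carrying all the weight. For the ``only if'' direction, assume $\mathscr{A}$ is a global attractor. Since $\mathscr{A}$ is compact it is bounded, so the closed unit neighborhood $B_0=\{x\in X:\operatorname{dist}(x,\mathscr{A})\le 1\}$ is bounded; because $\mathscr{A}$ attracts every bounded set, for each bounded $B$ there is $t_B$ with $S(t)B\subset B_0$ for $t\ge t_B$, so $B_0$ is a bounded absorbing set, giving (i). For (ii), given a bounded sequence $\{x_n\}$ and $t_n\to\infty$, the set $\{x_n\}$ is bounded, so $\operatorname{dist}(S(t_n)x_n,\mathscr{A})\to 0$; choosing $a_n\in\mathscr{A}$ with $\|S(t_n)x_n-a_n\|\to 0$ and using compactness of $\mathscr{A}$ to extract $a_{n_k}\to a\in\mathscr{A}$ yields $S(t_{n_k})x_{n_k}\to a$, which is asymptotic compactness.

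For the ``if'' direction, I would set $\mathscr{A}=\omega(B_0)=\bigcap_{s\ge 0}\overline{\bigcup_{t\ge s}S(t)B_0}$ and verify the four attractor properties. First, $\mathscr{A}$ is closed, being an intersection of closed sets, and it admits the sequential description: $y\in\mathscr{A}$ iff there exist $t_n\to\infty$ and $x_n\in B_0$ with $S(t_n)x_n\to y$ in norm. Using that $B_0$ is absorbing (so $S(t)B_0\subset B_0$ for $t$ large), hypothesis (ii) immediately gives that $\mathscr{A}$ is nonempty (any orbit starting in $B_0$ produces a convergent subsequence along $t_n\to\infty$) and relatively compact (a diagonal argument: approximate each point of a sequence in $\mathscr{A}$ by an orbit point $S(s_k)z_k$ with $s_k\to\infty$, $z_k\in B_0$, extract a norm-convergent subsequence via (ii), and use closedness of $\mathscr{A}$); hence $\mathscr{A}$ is compact. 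Next I would establish attraction: $\mathscr{A}$ attracts $B_0$ by contradiction, for if $\operatorname{dist}(S(t_n)x_n,\mathscr{A})\ge\eps$ for some $\eps>0$, $t_n\to\infty$, $x_n\in B_0$, then (ii) produces a subsequence $S(t_{n_k})x_{n_k}\to y$ with $y\in\mathscr{A}$ by the sequential description, a contradiction; and $\mathscr{A}$ attracts an arbitrary bounded $B$ because, with $S(t_B)B\subset B_0$, the semigroup law gives $S(t+t_B)B=S(t)(S(t_B)B)\subset S(t)B_0$, so $\operatorname{dist}(S(t+t_B)B,\mathscr{A})\le\operatorname{dist}(S(t)B_0,\mathscr{A})\to 0$.

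The main obstacle, and the only place where norm-to-weak continuity (rather than strong continuity) must be used carefully, is the invariance $S(t)\mathscr{A}=\mathscr{A}$. For $S(t)\mathscr{A}\subset\mathscr{A}$: take $y\in\mathscr{A}$, say $S(t_n)x_n\to y$ with $x_n\in B_0$, $t_n\to\infty$; apply (ii) to $\{S(t+t_n)x_n\}=\{S(t)(S(t_n)x_n)\}$ to extract a subsequence converging in norm to some $z\in\mathscr{A}$, while norm-to-weak continuity applied to $S(t_n)x_n\to y$ gives $S(t)(S(t_n)x_n)\wto S(t)y$; since a norm limit is also a weak limit and weak limits are unique, $S(t)y=z\in\mathscr{A}$. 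For $\mathscr{A}\subset S(t)\mathscr{A}$: given $y\in\mathscr{A}$ with $S(t_n)x_n\to y$, for $n$ large write $S(t_n)x_n=S(t)(S(t_n-t)x_n)$; by (ii) a subsequence of $\{S(t_n-t)x_n\}$ converges in norm to some $w\in\mathscr{A}$, and norm-to-weak continuity gives $S(t)(S(t_n-t)x_n)\wto S(t)w$, while $S(t_n)x_n\to y$ (hence also weakly), so $y=S(t)w\in S(t)\mathscr{A}$. This proves invariance and completes the construction; the standard minimality of $\mathscr{A}$ among closed attracting sets then follows from the attraction property, should one wish to record it.
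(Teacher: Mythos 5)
This statement is quoted verbatim from \cite{zhong} and the paper offers no proof of it (the proof there reads only ``From Lemma 4.3 and Lemma 4.5, it is clear that the conclusion is valid'' for the application, not for this criterion itself), so there is no in-paper argument to compare yours against. Your proof is correct and is essentially the standard one: the ``only if'' direction is routine, and the ``if'' direction constructs $\mathscr{A}=\omega(B_0)$, with the sequential characterization of the $\omega$-limit set doing the work for nonemptiness, compactness and attraction, and with norm-to-weak continuity invoked exactly where it is needed, namely for the two inclusions of invariance, where a strong limit supplied by asymptotic compactness is identified with a weak limit supplied by the continuity hypothesis via uniqueness of weak limits. Two points worth making explicit if you write this up: in the ``only if'' direction the distance to $\mathscr{A}$ need not be attained, so choose $a_n\in\mathscr{A}$ with $\|S(t_n)x_n-a_n\|\le \operatorname{dist}(S(t_n)x_n,\mathscr{A})+1/n$; and the identification $S(t)y=z$ rests on the weak topology of a Banach space being Hausdorff, which is what makes ``weak limits are unique'' legitimate. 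Neither is a gap, and the minimality remark at the end is a correct consequence of closedness plus attraction.
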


\subsection{Asymptotic compactness}



\begin{lemma}[Asymptotic compactness]\label{le4.6}
    Let the assumptions of Theorem \ref{weak} hold. Then the semigroup associated with \eqref{eq1} is asymptotically compact, that is for every sequence $\{z_n\}^{\infty}_{n=1}\subset \mathscr{B}_0$, and every sequence of times $t_n\to \infty$, there exists a subsequence $\{n_k\}$ such that
    $$\{S(t_{n_k})z_{n_k}\}^{\infty}_{=1}\rightarrow z \text{~~strongly in~~} \H.$$
\end{lemma}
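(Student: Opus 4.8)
The plan is to follow the classical energy-method route to asymptotic compactness for damped wave equations, adapted to the displacement-dependent damping $\sigma(u)u_t$. Since $S(t)$ is only norm-to-weak continuous (Theorem \ref{ntw_cont}), I cannot expect strong continuity, so the strategy is to exploit the energy equality together with the dissipative estimate \eqref{weak_energy_est} to upgrade weak convergence to strong convergence of the energy. Concretely, fix a bounded absorbing set $\mathscr{B}_0$, take $z_n\in\mathscr{B}_0$ and $t_n\to\infty$. Writing $u^n(t)$ for the solution with datum $z_n$ shifted so that it is defined on $[-t_n,0]$ (or equivalently working with $S(t_n+\tau)z_n$ on a fixed window $\tau\in[-T,0]$ and letting $T\to\infty$ through a diagonal argument), the uniform bound $R$ from \eqref{weak_energy_est} gives, after passing to a subsequence, $u^n\wto u$ weakly-$*$ in $L^\infty(-T,0;H^1_0)$, $u^n_t\wto u_t$ weakly-$*$ in $L^\infty(-T,0;L^2)$, and (by Proposition \ref{partial_cont} and Aubin--Lions type compactness in the $\H^{-1}$ topology, using the $H^{-1}$ bound on $u^n_{tt}$ established in the proof of Theorem \ref{ntw_cont}) $u^n\to u$ strongly in $C([-T,0];L^2(\Omega))$ and hence in $C([-T,0];L^{11/2}(\Omega))$. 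The limit $u$ is a complete bounded trajectory, i.e. a weak solution on all of $\R$.

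The core of the argument is then to show $\xi_{u^n}(0)\to \xi_u(0)$ strongly in $\H$, which by the standard weak-plus-norm criterion reduces to showing $\limsup_n\|\xi_{u^n}(0)\|_{\H}\le\|\xi_u(0)\|_{\H}$ (the reverse inequality is weak lower semicontinuity). For this I would use the energy equality. Introduce the modified energy $\mathbb{E}(u(t))=E(u(t))+\tfrac{\lambda}{2}\|u(t)\|^2+C|\Omega|$, chosen via \eqref{F2} so that it is equivalent to $\|\xi_u(t)\|_{\H}^2$ up to constants. Integrating the energy equality from $-T$ to $0$ for $u^n$ gives
\begin{equation*}
    \mathbb{E}(u^n(0))=\mathbb{E}(u^n(-T))-\int_{-T}^0\int_\Omega \sigma(u^n)|u^n_t|^2\,\tdx\,\tdt+\frac{\lambda}{2}\big(\|u^n(0)\|^2-\|u^n(-T)\|^2\big)+(\text{err}),
\end{equation*}
and similarly for the limit $u$. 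The point is that the lower-order terms pass to the limit: $\int_\Omega F(u^n(t))\,\tdx\to\int_\Omega F(u(t))\,\tdx$ and $\int_\Omega\phi u^n(t)\,\tdx\to\int_\Omega\phi u(t)\,\tdx$ because $u^n\to u$ in $C([-T,0];L^{11/2})$ and $F$ has subcritical growth relative to this exponent by \eqref{F1} with $p\le k+3\le 5$; the $\|u^n(t)\|^2$ terms converge by the $L^2$ convergence. Meanwhile, weak lower semicontinuity of $\xi\mapsto\tfrac12\|\nabla\xi_1\|^2+\tfrac12\|\xi_2\|^2$ and of the dissipation integral $\int\!\!\int\sigma(u^n)|u^n_t|^2$ (which follows from convexity, or from $\sigma(u^n)\to\sigma(u)$ a.e. together with weak convergence of $\sqrt{\sigma(u^n)}u^n_t$, noting $\sqrt{\sigma(u^n)}\to\sqrt{\sigma(u)}$ strongly in $L^\infty(-T,0;L^3)$) let me compare the two energy identities and deduce $\limsup_n\mathbb{E}(u^n(0))\le \mathbb{E}(u(0))+\big(\limsup_n\mathbb{E}(u^n(-T))-\mathbb{E}(u(-T))\big)$, with the bracket bounded by a constant uniform in $T$.

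To kill the bracket I let $T\to\infty$: since $\mathbb{E}(u^n(-T))\le Q(\|z_n\|_\H)e^{-(t_n-T)}+Q(\|\phi\|)$ and, for the complete trajectory, $\mathbb{E}(u(-T))$ is bounded below by a $T$-independent constant while the energy equality forces $\int_{-\infty}^0\int_\Omega\sigma(u)|u_t|^2<\infty$ so that $\mathbb{E}(u(-T))$ actually converges as $T\to\infty$; more simply, a standard argument shows $\limsup_{T\to\infty}\limsup_n \big(\mathbb{E}(u^n(-T))-\mathbb{E}(u(-T))\big)\le 0$ because both quantities are squeezed between the same dissipative bounds. Hence $\limsup_n\mathbb{E}(u^n(0))\le\mathbb{E}(u(0))$, giving $\|\xi_{u^n}(0)\|_\H\to\|\xi_u(0)\|_\H$, and combined with $\xi_{u^n}(0)\wto\xi_u(0)$ in $\H$ (weak convergence upgraded from the $\H^{-1}$ convergence of Theorem \ref{ntw_cont} plus boundedness in $\H$) we get strong convergence in $\H$, with $z=\xi_u(0)$. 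The main obstacle I anticipate is the rigorous justification that all the nonlinear lower-order terms genuinely pass to the limit under only $C([-T,0];L^{11/2})$ convergence — this is exactly where the super-cubic growth bites, and it is precisely the a priori space-time estimate \eqref{space-time} together with Remark \ref{rem_fu} (ensuring $f(u)\sigma^{-1/2}(u)\in L^2$) that must be invoked to make the energy equality valid for every trajectory in the family and to control the $\int F(u^n)$ terms; handling the dissipation integral's lower semicontinuity with a $u$-dependent weight $\sigma(u^n)$ rather than a fixed one is the other delicate point, resolved by the strong $L^\infty(L^3)$ convergence of $\sqrt{\sigma(u^n)}$.
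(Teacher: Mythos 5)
Your overall strategy (energy method: extract a complete limiting trajectory, use weak lower semicontinuity of the quadratic part and of the weighted dissipation $\int\!\!\int\sigma(u^n)|u^n_t|^2$, and upgrade weak convergence of $\xi_{u^n}(0)$ to strong convergence via convergence of the energy) is exactly the route the paper takes, and your treatment of the delicate points — the $u$-dependent weight in the dissipation handled through $\sqrt{\sigma(u^n)}\partial_t u^n\wto\sqrt{\sigma(u)}\partial_t u$, and the lower-order terms handled through $C([-T,0];L^{11/2})$ convergence and the space-time estimate — matches \eqref{eq4.16} and the $\Phi_\al$, $N_\al$ estimates in the paper. However, there is one genuine gap, and it sits at the crux of the argument: the elimination of the history term. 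You integrate the \emph{unweighted} energy equality over $[-T,0]$ and claim that $\limsup_{T\to\infty}\limsup_n\bigl(\mathbb{E}(u^n(-T))-\mathbb{E}(u(-T))\bigr)\le 0$ ``because both quantities are squeezed between the same dissipative bounds.'' This does not follow: both quantities are bounded by the same constants, but their difference need not vanish. Worse, the defect $d(T):=\limsup_n\mathbb{E}(u^n(-T))-\mathbb{E}(u(-T))$ is \emph{non-decreasing} in $T$: writing $\mathbb{E}(u^n(-T))=\mathbb{E}(u^n(-T'))-\int_{-T'}^{-T}D(u^n)$ for $T'>T$ and using lower semicontinuity of the dissipation gives $d(T)\le d(T')$, so enlarging $T$ can only make the defect worse, and $d(0)$ — the quantity you are trying to bound — is its infimum. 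The unweighted scheme is therefore circular.

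The paper's fix is the standard one and is essential here: it works with the perturbed energy $E_\al(u)=E(u)+\al\langle u,u_t\rangle$, which satisfies $\frac{\td}{\td t}E_\al+\al E_\al+G_\al+N_\al+\Phi_\al=0$, and integrates this identity against $e^{\al s}$ over the \emph{entire} interval $[-t_n,0]$. The history term then appears as $E_\al(z_n)e^{-\al t_n}$, which tends to $0$ using only the uniform bound on $\mathscr{B}_0$ — no comparison with $E_\al(v(-T))$ is ever needed. The price is a new cross term $\al\int_\Om\sigma(v_n)\partial_t v_n\,v_n$ inside $G_\al$, which cannot be absorbed for free; the paper controls it by the dissipation with $\delta$-dependent constants $a(\delta),b(\delta)$ and then takes the double limit $\al\to 0$ followed by $\delta\to 0$ (using finiteness of $\int_{-\infty}^0\!\!\int_\Om\sigma(v)|\partial_t v|^2$) to recover the sharp energy convergence $\varliminf_n E(v_n(0))=E(v(0))$. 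If you replace your $[-T,0]$ integration of the plain energy equality by this exponentially weighted identity on $[-t_n,0]$, the rest of your argument goes through essentially as the paper's does.
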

\begin{proof}
    We will verify it by the energy method (\cite{ball04}, \cite{mrw98}). Denote
    \begin{equation}\label{eq4.9}
        (v_n(t),\p_t v_n(t))=
        \begin{cases}
            S(t+t_n)z_n, & t\ge -t_n \\
            0,           & t<-t_n.
        \end{cases}
    \end{equation}
    $\{(v_n(t),\p_t v_n(t))\}$ is bounded in $L^\infty(\R;\H)$, according to Banach-Alaoglu theorem, we may assume without loss of generality that there exists a sequence $\{y_k\}\in \H$ and $y\in L^\infty(\R;\H)$, such that
    \begin{equation}\label{eq4.10}
        \begin{split}
            (v_n,\p_t v_n)&\wto y\text{~weakly star in~}L^\infty(\R;\H),\\
            (v_n(-k),\p_t v_n(-k))&\wto y_k\text{~weakly in~}\H,~\forall k\in\mathbb{N}.
        \end{split}
    \end{equation}
    For fixed $k\in\mathbb{N}$, according to weakly continuous of $S(t)$, we have
    $$(v_n(t),\p_t v_n(t))=S(t+k)(v_n(-k),\p_t v_n(-k))\wto S(t+k)y_k \text{~~in~~} \H$$
    for any $t\ge -k$.

    The uniqueness of limit implies that $y(t)=S(t+k)y_k$, $t\ge -k$. Hence $y(t)$ is a weak solution of \eqref{eq1} with $y(-k)=y_k$ for any $k\in \mathbb{N}$. Since $k$ is arbitrary, therefore $(v(t),\p_t v(t))=y(t)$ is a complete solution.

    Rewriting equation  as follows
    \begin{equation}\label{eq4.11}
        \frac{\td }{\td t}E_\al(u(t))+ \al E_\al(u(t))+G_\al(u)+N_\al(u)+\Phi_\al(u)=0.
    \end{equation}
    where
    \begin{align*}
         & E_\al(u(t))=E(u(t))+\al\langle u(t),u_t(t)\rangle,                                                \\
         & G_\al(u)=\int_\Omega \left(\sigma(u)|u_t|^2-\frac{3\al}2 u_t^2+\alpha \sigma(u)u_t u\right)\td x, \\
         & N_\al(u)=\frac\al2\|\nabla u\|^2-\al^2\langle u,u_t\rangle,                                       \\
         & \Phi_\al(u)=\al\int_{\Omega} (f(u)u-F(u))\td x.
    \end{align*}
    Multiplying identity \eqref{eq4.11} for $v_n$ by $e^{\al t}$ and integrating over $[-t_n,0]$, we conclude that
    \begin{equation}\label{eq4.12}
        E_\al(v_n(0))+\int_{-t_n}^0e^{\al s}\left(G_\al(v_n(s))+N_\al(v_n(s))+\Phi_\al(v_n(s))\right)\td s=E_\al(z_n)e^{-\al t_n}.
    \end{equation}
    In order to pass the limit $n\to \infty$, we will deal with term by term.

    Firstly, the Aubin-Lions lemma implies that
    $$v_n\to v \text{~in~} C_{loc}(\R;L^5(\Omega)).$$
    Thus, $v_n\to v$ almost everywhere in $(0,\infty)\times \Omega$.   Since $F(v_n)$ is bounded from below and weakly lower semi-continuous,   $E_\alpha$ and $N_\al$ are weakly lower semi-continuous on $H^1_0(\Omega)$.
    From the weak convergence
    $$(v_n(0),\p_tv_n(0))=S(t_n)z_n\wto y(0)=(v(0),\p_tv(0))\text{~in~}\H,$$
    we obtain
    \begin{equation}\label{eq4.13}
        \varliminf_{n\to \infty}E_\al(v_n(0))\ge E_\al(v(0)),~~ \varliminf_{n\to \infty}N_\al(v_n(0))\ge N_\al(v(0)).
    \end{equation}

    For the term involving $\Phi_\al(v_n)$ in \eqref{eq4.12},  notice that
    due to \eqref{F3}, $f(s)s-F(s)+K s^2$ is bounded from below. Thus Fatou's Lemma implies that
    \begin{equation*}
        \begin{aligned}
            \varliminf_{n\to\infty}\int_{-t_n}^0e^{\al s} \left(\Phi_\al(v_n)+\int_\Omega K v_n^2\td x\right) \td s \ge  \int_{-\infty}^0e^{\al s} \left(\Phi_\al(v)+\int_\Omega K v^2\td x\right) \td s.
        \end{aligned}
    \end{equation*}
    Meanwhile, $\|v_n\|^2 \to \|v\|^2$ in $C_{loc}(\R)$ yields
    \begin{equation*}
        \lim_{n\to\infty}\int_{-t_n}^0e^{\al s}\int_\Omega v_n^2\td x \td s= \int_{-\infty}^0e^{\al s}\int_\Omega v^2\td x \td s.
    \end{equation*}
    Therefore,
    \begin{equation}\label{new:20}
        \begin{aligned}
            \varliminf_{n\to\infty}\int_{-t_n}^0e^{\al s} \Phi_\al(v_n) \td s\ge \int_{-\infty}^0e^{\al s} \Phi_\al(v)\td s.
        \end{aligned}
    \end{equation}

    Next, to estimate the term involving $G_\al$ in \eqref{eq4.12}, recall that
    \begin{equation} \label{newdef:G}
        G_\al(v_n)=\int_\Omega \left(\sigma(v_n)|\partial_t v_n|^2-\frac{3\al}2 |\partial_t v_n|^2\right)\td x+\alpha \int_\Omega\sigma(v_n)\partial_t v_n v_n \td x.
    \end{equation}
    For the first term in \eqref{newdef:G}, since $\sigma(u)\ge 0$, we have
    \begin{equation}\label{eq4.16}
        \begin{aligned}
              & \varliminf_{n\to\infty}\int_{-t_n}^0e^{\al s} \int_\Omega  \sigma(v_n)|\partial_t v_n|^2 \td x\td s-\int_{-t_n}^0e^{\al s} \int_\Omega  \sigma(v)|\partial_t v|^2 \td x\td s \\
            = & \varliminf_{n\to\infty}\int_{-t_n}^0e^{\al s} \int_\Omega  \sigma(v_n)|\partial_t v_n|^2 \td x\td s+\int_{-t_n}^0e^{\al s} \int_\Omega  \sigma(v)|\partial_t v|^2 \td x\td s \\
              & -2\lim_{n\to\infty}\int_{-t_n}^0e^{\al s} \int_\Omega  \sqrt{\sigma(v_n)}\partial_t v_n\sqrt{\sigma(v)}\partial_t v \td x\td s                                               \\
            = & \varliminf_{n\to\infty}\int_{-t_n}^0e^{\al s}\int_\Omega  |\sqrt{\sigma(v_n)}\partial_t v_n-\sqrt{\sigma(v)}\partial_t v|^2 \td x\td s\ge 0.
        \end{aligned}
    \end{equation}
    For the second term in \eqref{newdef:G}, since
    $$ |\partial_t v_n|^2\le \frac1{\sigma_0} \sigma(v_n)|\partial_t v_n|^2,$$
    which implies that
    \begin{equation}
        \begin{aligned}
            \int_{-t_n}^0e^{\al s}\int_\Omega |\partial_t v_n|^2\td x\td s \le \frac1{\sigma_0}\int_{-t_n}^0e^{\al s}\int_\Omega  \sigma(v_n)|\partial_t v_n|^2\td x \td s.
        \end{aligned}
    \end{equation}
    For the term involving $\sigma(v_n)\partial_t v_n v_n$ in \eqref{newdef:G}, using condition \eqref{S2}
    \begin{equation}\label{eq4.18}
        \begin{aligned}
             & \int_{-t_n}^0e^{\al s}\alpha\int_\Omega  \sigma(v_n)\left|\partial_t v_n v_n\right|\td x\td s                                                        \\
             & \le \int_{-t_n}^0e^{\al s}\al \left(\int_\Omega \sigma(v_n)|\partial_t v_n|^2\tdx\int_\Omega \sigma(v_n)|v_n|^2 \td x\right)^{\frac12}\td s          \\
             & \le \int_{-t_n}^0e^{\al s}\al \left(\int_\Omega \sigma(v_n)|\partial_t v_n|^2\tdx\int_\Omega C(1+|v_n|^6) \td x\right)^{\frac12}\td s                \\
             & \le \|\mathscr{B}_0\|_{\H} \int_{-t_n}^0e^{\al s}\al \left(\int_\Omega \sigma(v_n)|\partial_t v_n|^2\tdx\right)^{\frac12}\td s                       \\
             & \le \|\mathscr{B}_0\|_{\H}  \int_{-t_n}^0e^{\al s}\al\left( 4\delta^{-1}\int_\Omega \sigma(v_n)|\partial_t v_n|^2\tdx  +\delta\right)\td s           \\
             & = 4\al\delta^{-1} \|\mathscr{B}_0\|_{\H} \int_{-t_n}^0e^{\al s} \int_\Omega \sigma(v_n)|\partial_t v_n|^2 \td x\td s +\delta \|\mathscr{B}_0\|_{\H}.
        \end{aligned}
    \end{equation}
    Collecting \eqref{eq4.16} -- \eqref{eq4.18} and inserting into \eqref{newdef:G} we conclude that
    \begin{equation}
        \varliminf_{n\to\infty}\int_{-t_n}^0e^{\al s}G_\al(v_n)\td s \ge (1-\al a(\delta))\varliminf_{n\to\infty}\int_{-t_n}^0e^{\al s}\int_\Omega \sigma(v_n)|\partial_t v_n|^2 \td x\td s-b(\delta),
    \end{equation}
    where
    $$a(\delta)=\frac{3}{2\sigma_0}+4\delta^{-1} \|\mathscr{B}_0\|_{\H}$$
    and
    $$b(\delta)= \delta  \|\mathscr{B}_0\|_{\H}.$$
    Now, let $\al$ be small enough such that $1-\al a(\delta)>0$.  It then follows from \eqref{eq4.16}  that
    \begin{equation}\label{eq4.19}
        \begin{aligned}
            \varliminf_{n\to\infty}\int_{-t_n}^0e^{\al s}G_\al(v_n)\td s & \ge (1-\al a(\delta))\varliminf_{n\to\infty}\int_{-t_n}^0e^{\al s}\int_\Omega \sigma(v_n)|\partial_t v_n|^2 \td x\td s-b(\delta) \\
                                                                         & \ge (1-\al a(\delta))\int_{-\infty}^0e^{\al s}\int_\Omega \sigma(v)|\partial_t v|^2 \td x\td s-b(\delta).
        \end{aligned}
    \end{equation}

    Repeating the derivation of inequality \eqref{eq4.18} with $v_n$ replaced by $v$, we obtain:
    \begin{equation}\label{eq4.20}
        \begin{aligned}
           &\int_{-\infty}^0e^{\al s}\left(\int_\Omega  \sigma(v)|\partial_t v|^2\td x-G_\al(v)\right) \td s\\  
           \ge& -\al\int_{-\infty}^0e^{\al s}\int_\Omega \sigma(v)\partial_t v v\td x\td s                           \\
            \ge& -\al a(\delta) \int_{-\infty}^0e^{\al s}\int_\Omega  \sigma(v)|\partial_t v|^2 \td x\td s-b(\delta).
        \end{aligned}
    \end{equation}
    Substituting \eqref{eq4.20} into \eqref{eq4.19}, we get:
    \begin{equation}\label{eq4.22}
        \begin{aligned}
        &\varliminf_{n\to\infty}\int_{-t_n}^0e^{\al s}G_\al(v_n)\td s \\
        \ge& \int_{-\infty}^0e^{\al s} G_\al(v) \td s -2\al a(\delta)\int_{-\infty}^0e^{\al s}\int_\Omega  \sigma(v)|\partial_t v|^2 \td x\td s-2b(\delta).
        \end{aligned}
    \end{equation}
    Combining with inequalities \eqref{eq4.13}, \eqref{new:20} and \eqref{eq4.22} gives
    \begin{equation}\label{eq4.23}
        \begin{aligned}
                & \varliminf_{n\to \infty}  \int_{-t_n}^0e^{\al s}\left(G_\al(v_n(s))\td s+N_\al(v_n(s)) \td s + \Phi_\al(v_n(s))\right)\td s                                                 \\
            \ge & \int_{-\infty}^0e^{\al s}\left(G_\al(v)+\Phi_\al(v)+N_\al(v)\right)\td s\\
            &-2\al a(\delta)\int_{-\infty}^0e^{\al s}\int_\Omega \sigma(v)|\partial_t v|^2\td x\td s-2b(\delta).
        \end{aligned}
    \end{equation}
    On the other hand, since $v(t)$ is also a weak solution of equation \eqref{eq1} on $\R$, repeating the derivation of \eqref{eq4.12}, we get
    \begin{equation}
        E_\al(v(0))+\int_{-\infty}^0e^{\al s}\left(G_\al(v)+\Phi_\al(v)+N_\al(v)\right)\td s=0.
    \end{equation}
    Thus passing the lower limit as $n\to \infty$ in the equality \eqref{eq4.12}  yields
    \begin{equation*}
        \begin{aligned}
            0  = & \varliminf_{n\to \infty} \left(E_\al(v_n(0))+\int_{-t_n}^0e^{\al s}\left(G_\al(v_n(s)) + N_\al(v_n(s))+\Phi_\al(v_n(s))\right)\td s\right)             \\
            \ge  & \varliminf_{n\to \infty}  E_\al(v_n(0))+\int_{-\infty}^0e^{\al s}\left(G_\al(v)+\Phi_\al(v)+N_\al(v)\right)\td s                                       \\
                 & -2\al a(\delta)\int_{-\infty}^0e^{\al s}\int_\Omega \sigma(v)|\partial_t v|^2\td x\td s-2b(\delta)                                                     \\
            =    & \varliminf_{n\to \infty}  E_\al(v_n(0))-E_\al(v(0))-2\al a(\delta)\int_{-\infty}^0e^{\al s}\int_\Omega \sigma(v)|\partial_t v|^2\td x\td s-2b(\delta).
        \end{aligned}
    \end{equation*}
    In conclusion, the following inequality holds:
    \begin{equation*}
        E_\al(v(0))\le \varliminf_{n\to \infty}E_\al(v_n(0)) \le E_\al(v(0))+2\al a(\delta)\int_{-\infty}^0e^{\al s}\int_\Omega \sigma(v)|\partial_t v|^2\td x\td s+2b(\delta).
    \end{equation*}
    Note that $\int_{-\infty}^0 \int_\Omega \sigma(v)|\partial_t v|^2\td x\td s$ is finite, so we can take the limit $\alpha \to 0$ to derive
    \begin{equation} \label{new:23}
        E(v(0))\le \varliminf_{n\to \infty}E(v_n(0)) \le E(v(0))+2b(\delta).
    \end{equation}
    Since \eqref{new:23} holds for any $\delta>0$, passing the limit $\delta\to 0$, we obtain the convergence of energy
    \begin{equation*}
        \varliminf_{n\to \infty}E(v_n(0))=E(v(0)),
    \end{equation*}
    which implies that $$\varliminf_{n\to \infty}\|S(t_n)z_n\|_{\H}=\|y(0)\|_{\H}.$$ Recall that weak convergence $S(t_n)z_n\wto y(0)\text{~in~}\H$, therefore there exists a strong convergent subsequence $S(t_n)z_n\to y(0)$ strongly in $\H$. Thus the asymptotic compactness of the semigroup $S(t)$ is verified. The proof is complete.
\end{proof}

\subsection{Exsitence of the global attractor}
Now we are ready to state our main result in this section.
\begin{thm}\label{th4.4}
    Let the assumptions of Theorem \ref{weak} hold. Then, the solution semigroup $S(t)$ generated by the weak solution of problem \eqref{eq1} possesses a global attractor $\mr{A}$ in the phase space $\H$.
\end{thm}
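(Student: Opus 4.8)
The plan is to invoke the abstract existence criterion of \cite{zhong} (Theorem 4.2, Remark 4.4) quoted above: since $S(t)$ has already been shown to be a well-defined semigroup on $\H$ (uniqueness of weak solutions) and norm-to-weak continuous on $\H$ for every $t>0$ (Theorem \ref{ntw_cont}), it remains only to check the two structural hypotheses (i) existence of a bounded absorbing set and (ii) asymptotic compactness.

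First I would produce the bounded absorbing set directly from the dissipative estimate \eqref{weak_energy_est}. Setting $\mathscr{B}_0$ to be the closed ball in $\H$ of radius $R_0:=1+2Q(\|\phi\|)$, one sees that for any bounded set $B\subset\H$ with $\sup_{z\in B}\|z\|_{\H}\le \rho$, the bound
$\|S(t)z\|_{\H}\le Q(\rho)e^{-t}+Q(\|\phi\|)$
shows $S(t)B\subset\mathscr{B}_0$ for all $t\ge t_B:=\log\!\big(Q(\rho)\big)_+$; hence $\mathscr{B}_0$ is absorbing and, enlarging slightly, also positively invariant. This verifies (i).

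Next, condition (ii) is exactly the content of Lemma \ref{le4.6}: for every sequence $z_n\in\mathscr{B}_0$ and every $t_n\to\infty$, the sequence $S(t_n)z_n$ has a subsequence converging strongly in $\H$. Applying the abstract theorem of \cite{zhong} to the norm-to-weak continuous semigroup $S(t)$ then yields a global attractor $\mathscr{A}\subset\H$, which by construction is the $\omega$-limit set of $\mathscr{B}_0$; it is compact, fully invariant ($S(t)\mathscr{A}=\mathscr{A}$), and attracts all bounded subsets of $\H$.

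Since every nontrivial ingredient—well-posedness, the dissipative estimate, norm-to-weak continuity, and above all the energy-method asymptotic compactness of Lemma \ref{le4.6}—is already in place, there is no real obstacle here; the only care needed is to state the absorbing ball explicitly and to note that it may be taken positively invariant before feeding it into the abstract criterion. Thus the proof is a short assembly of the preceding results.
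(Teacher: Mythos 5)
Your proposal is correct and follows exactly the route the paper takes: the paper's own proof is a one-line assembly of the abstract criterion of \cite{zhong} with the norm-to-weak continuity of Theorem \ref{ntw_cont}, the bounded absorbing set furnished by the dissipative estimate \eqref{weak_energy_est}, and the asymptotic compactness of Lemma \ref{le4.6}. Your version merely makes the absorbing ball explicit, which the paper leaves implicit.
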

\begin{proof}
    From Lemma 4.3 and Lemma 4.5, it is clear that the conclusion is valid.
\end{proof}

\section{Regularity of the global attractor}
In this section, we will establish the regularity of the global attractor $\mr{A}$ for the semigroup $\{S(t)\}_{t\ge 0}$ generated by the weak solution of problem \eqref{eq1}. To this end, there is an additional condition of the damping coefficient $\sigma=\sigma_l$, namely for some large enough constant $l>0$ , it holds that
\begin{equation}\label{S3}
    \sigma_l(s)=\gamma>0,\forall s\in [-l,l]\tag{S3}
\end{equation}

Here, we state the main result as follow.
\begin{thm}[Regularity of the global attractor]\label{regularity}
    Assume that $\phi\in L^2(\Omega)$ and conditions \eqref{S1}-\eqref{F2} and \eqref{S3} hold with damping parameter $l$ large enough. Then the global attractor $\mr{A}$ of problem \eqref{eq1} is bounded in $\V$.
\end{thm}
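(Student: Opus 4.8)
The plan is to follow the decomposition-and-bootstrap scheme standard for regularity of attractors of damped wave equations, adapting it to the super-cubic nonlinearity through the space-time estimate of Theorem~\ref{priori} (and Remark~\ref{rem_fu}) and to the displacement-dependent damping through assumption \eqref{S3}.

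\textbf{Step 1 (reduction to complete bounded trajectories).} Since $\mathscr{A}$ is invariant (Theorem~\ref{th4.4}), each point of $\mathscr{A}$ lies on a complete trajectory $u(\cdot)$, $t\in\R$, with $\sup_{t\in\R}\|\xi_u(t)\|_{\H}\le R_0$, where $R_0$ depends only on $\|\phi\|$ and the structural constants of \eqref{S1}--\eqref{F2}. Along such a trajectory the energy equality together with the dissipative bound \eqref{weak_energy_est} gives $\int_{\R}\int_\Om\sigma(u)u_t^2\,\tdx\,\tdt<\infty$; Theorem~\ref{priori} applied on unit time-windows gives $\sup_{t}\|u\|_{L^{k+2}(t,t+1;L^{3k+6}(\Om))}\le R_1$; and Remark~\ref{rem_fu} gives $\sup_{t}\|f(u)\sigma^{-1/2}(u)\|_{L^2((t,t+1)\times\Om)}\le R_2$, with $R_1,R_2$ depending only on the data. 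The goal is $\sup_{t}\|\xi_u(t)\|_{\V}\le R_3$ with $R_3=R_3(\text{data})$. Since in dimension three $\V\cto C(\bar\Om)\times H^1_0(\Om)$, such a bound forces $\|u(t)\|_\infty\le R_4$ along every trajectory on $\mathscr{A}$ with $R_4=R_4(\text{data})$; hence it is consistent to require that the parameter $l$ in \eqref{S3} satisfy $l>R_4$, in which case $\sigma(u(t))\equiv\gamma$ on $\mathscr{A}$ and the displacement-dependent damping acts there exactly as the linear damping $\gamma u_t$. The apparent circularity is harmless: the $\V$-bound obtained below depends only on the data, not on $l$, as long as $l$ exceeds the relevant $L^\infty$-threshold, which is itself produced at an intermediate stage of the bootstrap.

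\textbf{Step 2 (splitting and the first regularity gain).} Fix a complete trajectory and, for $s<t$, decompose $u=v+w$ on $[s,\infty)$, where $v$ solves $v_{tt}+\sigma(u)v_t-\Dx v+\lm v=0$ with $\xi_v(s)=\xi_u(s)$, and $w=u-v$ solves $w_{tt}+\sigma(u)w_t-\Dx w+\lm w=\phi+\lm u-f(u)$ with $\xi_w(s)=0$. Multiplying the $v$-equation by $v_t+\mu v$ with $\mu>0$ small and using $\sigma\ge\sigma_0$ and $\lm<\lm_1$, one gets $\|\xi_v(t)\|_{\H}\le CR_0 e^{-\ka(t-s)}$; letting $s\to-\infty$ yields $v\equiv0$, so that $u=w$ is the unique bounded-on-$\R$ solution of
\[
w_{tt}+\sigma(u)w_t-\Dx w+\lm w=g,\qquad g:=\phi+\lm u-f(u).
\]
Applying $(-\Dx)^{\theta/2}$ to this equation and running the energy estimate on long time-windows, the linear part and $\phi+\lm u$ are controlled in $(H^{1+\theta}\cap H^1_0)\times H^\theta$ for $\theta\in(0,1]$; the forcing $f(u)$ is controlled by interpolating (Gagliardo--Nirenberg) between the current $H^1$-bound for $u$ and the space-time bound $\|u\|_{L^{k+2}(L^{3k+6})}\le R_1$, the admissibility of this interpolation for small $\theta$ being precisely what the constraint $p\le k+3$ in \eqref{F1} encodes, with Remark~\ref{rem_fu} supplying the borderline integrability; and the damping contribution $(-\Dx)^{\theta/2}(\sigma(u)w_t)$, a priori only of negative order, is handled in the low-regularity range using the growth \eqref{S2} together with the space-time bound on $u$. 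This produces $\sup_t\|\xi_u(t)\|_{(H^{1+\theta_1}\cap H^1_0)\times H^{\theta_1}}<\infty$ for some $\theta_1\in(1/2,1)$; by Sobolev embedding $\|u\|_{L^\infty((t,t+1)\times\Om)}\le R_4(\text{data})$ on $\mathscr{A}$, which validates the choice $l>R_4$ in Step~1 and reduces the equation on $\mathscr{A}$ to $w_{tt}+\gamma w_t-\Dx w+\lm w=g$.

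\textbf{Step 3 (bootstrap to $\V$) and main obstacle.} With the constant-damping equation available on $\mathscr{A}$, iterate the $(-\Dx)^{\theta/2}$-estimate: at stage $j$ the bound $u(t)\in H^{1+\theta_j}$ together with $\|u\|_{L^{k+2}(L^{3k+6})}\le R_1$ allows one to bound $(-\Dx)^{\theta_{j+1}/2}f(u)$ in the relevant $L^2$ time-space norm with $\theta_{j+1}=\theta_j+\de$ for a fixed increment $\de>0$ depending only on $p,k$, while $\gamma(-\Dx)^{\theta_{j+1}/2}w_t$ is absorbed by the dissipation $\gamma\|(-\Dx)^{\theta_{j+1}/2}w_t\|^2$. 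After finitely many steps $\theta_j\ge1$, hence $\sup_{t\in\R}\|\xi_u(t)\|_{\V}\le R_3(\text{data})$; since $\mathscr{A}$ is the union of all such complete bounded trajectories, $\mathscr{A}$ is bounded in $\V$, which is Theorem~\ref{regularity}. The crux is the super-cubic term $f(u)$: because $p$ may be as large as $k+3$, $f(u)$ narrowly fails to belong to $L^2_t H^\theta_x$ for any $\theta>0$, so no single smoothing step reaches $\V$ and one must run the finite bootstrap, at each stage balancing the Sobolev exponent $1+\theta_j$ against the space-time exponents $k+2$ and $3k+6$ of \eqref{space-time} and checking that the gain $\de$ stays bounded away from $0$. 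The secondary difficulty is intrinsic to the displacement-dependent damping: $\sigma(u)u_t$ is, without extra hypotheses, only of order $L^{6/5}$ in space and too weak to iterate at high regularity (where $\sigma'(u)\Nx u$ terms appear), which is exactly why \eqref{S3} is imposed — together with the a priori $L^\infty$-bound on $u$ over $\mathscr{A}$ from Step~2, it replaces $\sigma(u)$ by the constant $\gamma$ and removes this obstruction.
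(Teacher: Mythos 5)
Your plan is a genuinely different route from the paper's (a fractional-derivative bootstrap in the spirit of the quintic-wave literature rather than Khanmamedov's cutting-off decomposition), but as written it has a gap at exactly the point where the displacement-dependent damping bites. In Step 2 you must run the $(-\Dx)^{\theta/2}$-energy estimate \emph{before} \eqref{S3} can be invoked, i.e.\ with the full nonlinear damping $\sigma(u)u_t$ still present, and you must reach $\theta_1>1/2$ in that single pass in order to get the $L^\infty$ bound that linearizes the damping. Two things go wrong. First, the damping contribution produces, after extracting the good term $\int\sigma(u)|(-\Dx)^{\theta/2}u_t|^2$, a commutator of the type $[(-\Dx)^{\theta/2},\sigma(u)]u_t$, whose principal part behaves like $\sigma'(u)\nabla u\,u_t$ with $|\sigma'(u)|\lesssim|u|^{m-1}$ and $m$ up to $4$, while the only information on $u_t$ is $u_t\in L^\infty_tL^2_x$ plus the dissipation integral $\int\sigma(u)u_t^2$. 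You assert this is ``handled in the low-regularity range using \eqref{S2} and the space-time bound,'' but no mechanism is given, and none is apparent: this commutator is precisely the obstruction that forces the paper (and the earlier work on this equation) never to differentiate the damping term at all. Second, even granting a positive gain, your own Step 3 concedes that each pass gains only a fixed small increment $\delta$ determined by $p$ and $k$; there is no reason the first pass clears $\theta=1/2$. The argument is therefore circular: you need $u\in H^{3/2+}$ to invoke \eqref{S3} and replace $\sigma(u)$ by $\gamma$, and you need the damping already constant to bootstrap up to $H^{3/2+}$.

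The paper breaks this circle by never smoothing through the nonlinear damping. The $L^\infty$ threshold is obtained from the \emph{auxiliary} constant-damping problem \eqref{eq_linear}, whose strong solutions are bounded in $\V\subset C(\bar\Om)\times H^1_0(\Om)$ by the quintic strong-solution theory; a continuity/contradiction argument (Lemma \ref{unif_strogn_est}) then shows the true solution coincides with the auxiliary one for $l$ large, with a bound independent of $l$. To produce a first point of the attractor in $\V$, the paper uses the gradient structure to find $t_i\to-\infty$ with $z(t_i)\to$ an equilibrium $\bar u_0\in\V$, and splits $u=w_n+v_n+\bar u_0$ with a \emph{truncated} nonlinearity $g_n$ and constant damping $\gamma$ in the $w_n$-equation (Lemmas \ref{lem5.3}--\ref{lem5.4}); the smallness needed to close the $v_n$-estimate comes from the uniform smallness of $\int_{\{|u|\ge n\}}|u|^6$ supplied by the compactness of $\mathscr{A}$ — an ingredient absent from your proposal. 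Unless you can supply a genuine commutator estimate for $(-\Dx)^{\theta/2}(\sigma(u)u_t)$ at some $\theta>0$ under \eqref{S1}--\eqref{S2} alone, the proposal does not establish Theorem \ref{regularity}.
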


The regularity of the global attractor will be demonstrated in two steps. First, a uniform estimate of the strong solution is established in Lemma \ref{unif_strogn_est} below. Next, find a regular point in $\V$ on any bounded complete trajectory, which are shown in Lemma \ref{lem5.3} and Lemma \ref{lem5.4}.

{\bf Step 1: Uniform energy estimate of the strong solution.}

To prove the regularity of A, we start with the following lemma:

\begin{lemma}\label{unif_strogn_est}
    Assume that $\phi\in L^2(\Omega)$ and conditions \eqref{S1}-\eqref{F2} and \eqref{S3} hold. Let $B$ be a bounded subset of $\V$. Then there exists $l(B) > 0$ such that for every
    $l\ge l_B$ and $(u_0, u_1)\in B$ the problem \eqref{eq1} has a unique strong solution $u\in L^\infty(0,\infty;H^2(\Omega))\cap W^{1,\infty}(0,\infty;H^1_0(\Omega))$, which is also the solution of
    \begin{equation}\label{eq_linear}
        \begin{cases}
            v_{tt}+\gamma v_t-\Delta v+ f(v) = \phi, in~ (0,\infty)\times\Omega, \\
            v = 0, on~ (0,\infty)\times\partial \Omega,                          \\
            v(0,\cdot) = u_0, v_t(0,\cdot) = u_1, in~ \Omega,
        \end{cases}
    \end{equation}
    and satisfies the following inequality
    \begin{equation}\label{}
        \|(u(t),u_t(t))\|_{\V}\le r, ~~\forall t\ge 0,
    \end{equation}
    where the positive constant $r$ depends on $B$ and $\phi$, but is independent of $(u_0, u_1)$.
\end{lemma}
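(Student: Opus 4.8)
The plan is to exploit the special structure \eqref{S3}: if the strong solution stays in the slab $\{|u(t,x)|\le l\}$ in an appropriate sense, then $\sigma(u)u_t=\gamma u_t$ and problem \eqref{eq1} reduces to the linearly damped equation \eqref{eq_linear}, for which $\V$-regularity and uniform-in-time bounds are classical. So I would argue in three stages. First, for $(u_0,u_1)\in B$ bounded in $\V$, I would solve \eqref{eq_linear} (linear damping, super-cubic $f$ with \eqref{F1}--\eqref{F3}) by the standard Galerkin scheme, obtaining a unique strong solution $v\in L^\infty(0,\infty;H^2\cap H^1_0)\cap W^{1,\infty}(0,\infty;H^1_0)$; the dissipative $\V$-estimate $\|(v(t),v_t(t))\|_{\V}\le r(B,\phi)$ comes from multiplying \eqref{eq_linear} by $-\Delta v_t+\varepsilon(-\Delta v)$ and absorbing $\langle f'(v)\nabla v,\nabla v_t\rangle$ via \eqref{F1} and the Sobolev embeddings in dimension three (this is where the super-cubic restriction $p\le 5$, hence $p\le k+3\le 5$, is used), together with \eqref{F2}, \eqref{F3} for the lower bound on the energy. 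Crucially $r$ does not depend on $l$.

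Second, I would upgrade the pointwise bound. Since $\|(v(t),v_t(t))\|_{\V}\le r$, we have $\|v(t)\|_{H^2}\le r$; but $H^2(\Omega)\hookrightarrow C(\bar\Omega)$ fails in $\R^3$ only barely — $H^2\hookrightarrow L^q$ for all $q<\infty$ and $H^2\hookrightarrow C^{0,1/2}$ when combined with the equation is not literally available, so instead I would argue that $v\in L^\infty(0,\infty;H^2(\Omega))\subset L^\infty(0,\infty;L^\infty(\Omega))$ since $H^2(\Omega)\hookrightarrow L^\infty(\Omega)$ in dimension $3$ (indeed $H^2\hookrightarrow C(\bar\Omega)$ for $n\le 3$). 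Hence there is a constant $l_B$, depending only on $r$ (thus on $B$ and $\phi$), such that $\|v(t)\|_{L^\infty}\le l_B$ for all $t\ge 0$. Therefore, for every $l\ge l_B$, condition \eqref{S3} gives $\sigma_l(v(t,x))=\gamma$ a.e., so $v$ satisfies \eqref{eq1} as well: it is a weak (in fact strong) solution of the displacement-dependent problem.

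Third, I would invoke uniqueness. By the Corollary following Proposition 3.4, the weak solution of \eqref{eq1} with data $(u_0,u_1)$ is unique, so the strong solution $v$ of \eqref{eq_linear} constructed above \emph{is} the solution $u=S(\cdot)(u_0,u_1)$ of \eqref{eq1}. This simultaneously gives existence and uniqueness of the strong solution of \eqref{eq1}, its membership in $L^\infty(0,\infty;H^2)\cap W^{1,\infty}(0,\infty;H^1_0)$, and the bound $\|(u(t),u_t(t))\|_{\V}\le r$ with $r$ independent of $(u_0,u_1)\in B$ (and of $l\ge l_B$).

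The main obstacle is the $\V$-a priori estimate for \eqref{eq_linear} in stage one: controlling the term $\int_\Omega f'(v)\nabla v\cdot\nabla v_t\,\td x$ arising from testing with $-\Delta v_t$, uniformly in time, when $f$ is super-cubic. The remedy is the well-known trick of introducing the shifted energy with the $\varepsilon(-\Delta v)$ multiplier to generate a coercive $\|\nabla v\|^2$ term, using \eqref{F1} with $p\le 5$ together with $\|v\|_{H^1_0}\le Q(\|\phi\|)$ from Theorem 3.1 to bound $\|f'(v)\|_{L^3}\lesssim 1+\|v\|_{L^{3(p-2)}}^{p-2}\lesssim 1+\|v\|_{H^2}^{\theta}$ with $\theta<2$ (interpolating $L^{3(p-2)}$ between $L^6$ and $L^\infty$, or between $H^1$ and $H^2$), so that the bad term is absorbed by $\|\nabla v_t\|^2+\|\Delta v\|^2$ after a Young inequality, and dissipativity closes the estimate via a Gronwall argument. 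Everything after that is bookkeeping.
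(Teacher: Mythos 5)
Your overall architecture coincides with the paper's: solve the linearly damped problem \eqref{eq_linear}, use $H^2(\Omega)\hookrightarrow C(\bar{\Omega})$ in dimension three to fix $l_B$ so that the solution never leaves the region where $\sigma_l\equiv\gamma$, and identify it with the solution of \eqref{eq1}. Your final identification is in fact slightly cleaner than the paper's: you observe that $v$ is itself a weak solution of \eqref{eq1} and invoke the uniqueness corollary of Section 3, whereas the paper first produces a local strong solution of \eqref{eq1} via the $-\Delta u_t$ multiplier and then runs a continuation/contradiction argument (if $\|u(t)\|_{C(\bar{\Omega})}$ ever reached $r_2+\varepsilon$, then on the preceding interval $u$ would solve \eqref{eq_linear}, hence equal $v$, hence stay bounded by $r_2$). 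Both routes are legitimate.

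The genuine gap is in your stage one. The hypotheses allow $p\le k+3$ with $k=\min\{r,4-m\}$, which reaches the quintic endpoint $p=5$ (take $r=m=2$), and there the uniform-in-time $\V$-bound for \eqref{eq_linear} is \emph{not} obtainable by the multiplier $-\Delta v_t+\varepsilon(-\Delta v)$ alone. Your own interpolation gives $\|f'(v)\|_{L^3}\lesssim 1+\|v\|_{L^9}^{3}\lesssim 1+\|v\|_{H^1}^{5/2}\|v\|_{H^2}^{1/2}$, so the bad term $\langle f'(v)\nabla v,\nabla v_t\rangle$ is of order $y^{5/4}$ with $y=\|\nabla v_t\|^2+\|\Delta v\|^2$; this is superlinear, so Gronwall yields only a local-in-time bound and cannot produce the global dissipative estimate $\|(v(t),v_t(t))\|_{\V}\le r$ for all $t\ge 0$. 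This is precisely why the paper does not prove this step by energy methods but cites \cite{yan}, where the global $H^2$-bound for the weakly damped quintic equation is obtained using Strichartz-type space-time estimates. For $p<5$ your argument does close; to cover the full stated range you must either restrict to the subquintic case or import the result of \cite{yan} as the paper does. Everything downstream of that bound in your proposal is correct.
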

\begin{proof}
    The proof follows the strategy in \cite{khan10-3d}. According to \cite{yan}, problem \eqref{eq_linear} has a unique strong solution and there exists a constant $r_1$, which depends on $B$, such that
    $$\|(v(t),v_t(t))\|_{\V}\le r_1.$$
    Then by Sobolev embedding $H^2(\Omega)\subset C(\bar{\Omega})$,
    \begin{equation}\label{L_infty}
        \|v(t)\|_{C(\bar{\Omega})}\le r_2.
    \end{equation}
    Now let $l(B)=r_2+1$, consider problem \eqref{eq1} for $l\ge l_B$. To derive the bound for the strong solution of equation \eqref{eq1}, multiply equation \eqref{eq1} by $-\Delta u_t$. However, since $-\Delta u_t$ belongs to $H^{-1}$, it cannot directly serve as a test function. Therefore, a formal derivation of the a priori estimate is presented next, which is justified using the Galerkin approximation method described in Theorem \ref{weak}.

    Multiplying equation \eqref{eq1} by $-\Delta u_t$ and integrating on $\Omega$, we have
    \begin{equation}\label{Dut}
        \frac{\td}{\td t}\left(\frac12\|\nabla u_t\|^2+\frac12\|\Delta u\|^2\right)-\int_\Omega \sigma_l(u)u_t\Delta u_t\td x-\int_\Omega f(u)\Delta u_t\td x=0.
    \end{equation}
    For the term involves damping,
    \begin{equation}\label{sigma_ut_Dut}
        \begin{aligned}
            -\int_\Omega \sigma_l(u)u_t\Delta u_t\td x & =\int_\Omega \sigma_l(u)|\nabla u_t|^2\td x-\int_\Omega \sigma_l^\prime(u)u_t\nabla u\cdot\nabla u_t\td x \\
                                                       & \ge -\|\sigma_l^\prime(u)\|_{L^6(\Omega)}\|u_t\|_{L^6(\Omega)}\|\nabla u\|_{L^6(\Omega)}\|\nabla u_t\|    \\
                                                       & \ge -C(1+\|\Delta u\|^{m-1})\|\nabla u_t\|^2\|\Delta u\|                                                  \\
                                                       & \ge -C((\|\nabla u_t\|^2+\|\Delta u\|^2)^3+1).
        \end{aligned}
    \end{equation}
    And for the last term in \eqref{Dut}, we have
    \begin{equation}\label{f_Dut}
        \begin{aligned}
            -\int_\Omega f(u)\Delta u_t\td x & =\int_\Omega f^\prime(u)\nabla u\cdot \nabla u_t\td x\ge -\|f^\prime(u)\|_{L^{3}(\Omega)}\|\nabla u\|_{L^6(\Omega)}\|\nabla u_t\| \\
                                             & \ge -C(1+\|\Delta u\|^{p-1})\|\Delta u\|\|\nabla u_t\|                                                                            \\
                                             & \ge -C((\|\nabla u_t\|^2+\|\Delta u\|^2)^3+1).
        \end{aligned}
    \end{equation}
    Substituting \eqref{sigma_ut_Dut} and \eqref{f_Dut} into \eqref{Dut}, we have
    \begin{equation}
        \frac{\td}{\td t}\left( \|\nabla u_t\|^2+ \|\Delta u\|^2\right)\le C((\|\nabla u_t\|^2+\|\Delta u\|^2)^3+1).
    \end{equation}

    We see that problem \eqref{eq1} with initial data $(u_0, u_1)$, has a unique local strong solution $u\in L^\infty(0,T_B;H^2(\Omega))\cap W^{1,\infty}(0,T_B;H^1_0(\Omega))$ for some $T_B=T(\|B\|_{\V})> 0$.
    Then by the embedding theorems (see \cite[Theorem 3.1, p. 19 and Lemma 8.1, p. 275]{lions72}), we obtain that $(u, u_t ) \in C_s(0, T_B; H^1_0(\Omega))$ and $u \in C([0, T_B]; C(\bar{\Omega}))$.

    Set
    $$T_{sup} = \sup \{T  :  u \in L^{\infty}(0, T; H^2(\Omega)) \cap W^{1,\infty} (0, T; H^1_0(\Omega))\}.$$
    Let us show that $T_{sup} =\infty$. If not, then
    $$T_{sup}<\infty \and \limsup_{t\to T_{sup}}\|(u(t), u_t (t))\|_{\V}=\infty.$$
    Assume that $\limsup_{t\to T_{sup}}\|u(t)\|_{C(\bar{\Omega})}>r_2$. Then, since by \eqref{L_infty}, $\|u_0\|_{C(\bar{\Omega})} \le r_2$, there exists $\varepsilon \in (0, 1)$ and $T_0\in (0, T_{sup})$, such that  $\|u(T_0)\|_{C(\bar{\Omega})}= r_2 + \varepsilon$ and $\|u(t)\|_{C(\bar{\Omega})}\le r_2 + \varepsilon$, $t\in [0, T_0]$. So by \eqref{S3}, for $l\ge l(B)$ the function $u(t, x)$ satisfies the linear damping problem \eqref{eq_linear} on $[0, T_0]\times \Omega$ and by uniqueness we have $u(t) = v(t)$ on $[0, T_0]$. Then by \eqref{L_infty}, we have $\|u\|_{C([0,T_0];C(\bar{\Omega}))} \le r_2$, which contradicts to $\|u(T_0)\|_{C(\bar{\Omega})}= r_2 + \varepsilon$. Hence $T_{sup} =\infty$.
\end{proof}


{\bf Step 2: Regularity estimate in $\V$ of the global attractor.}

Here we will follow the cutting-off method of Khanmamedov\cite{khanmamedov10}, which has been refined in \cite{liu23} to establish regularity estimate in $\V$ of global attractor for wave equation with nonlinear interior damping term $g(u_t)$.

Since $\mathscr{A} = S(t)\mathscr{A}$ for all $t > 0$, for any point in the attractor $(u_0, u_1)\in \mathscr{A}$, there is a trajectory $z(t)$ passing trough this point and such that $z(t)=(u(t), u_t(t))\in \mathscr{A}$ for all $t\in R$. Since $\mathscr{A}$ is compact, there exist a decreasing negative sequence $\{t_{i}\}$, $t_{i}\to -\infty$ and a point $z_0=(\bar{u}_0, \bar{u}_1)\in \mathscr{A}$, s.t. $z(t_{i})\to z_0$ in $\H$ as $i\to \infty$. Then $z(t+t_{i})=S(t)z(t_{i})\to S(t)z_0$ for any $t\ge 0$.

On the other hand, the dynamical system $(S(t), \H)$ is a gradient system with a strict Lyapunov functional
$$\mathcal{L}(z(t))=\frac12\|u_t\|^2+\frac12\|\nabla u\|^2+\int_\Omega F(u)\td x-\int_\Omega \phi u\td x.$$
It concludes that
$$\mathcal{L}(z_0)=\lim_{i\to\infty}\mathcal{L}(z(t_{i}))=\lim_{i\to\infty}\mathcal{L}(z(t+t_{i}))=\mathcal{L}(S(t)z_0)~~ \text{for any  } t\ge 0,$$
and therefore $z_0$ is an equilibrium point of semigroup $S(t)$, i.e., $\bar{u}_0$ satisfies the elliptic equation $-\Delta \bar{u}_0+f(\bar{u}_0)=\phi$ and $\bar{u}_1=0$. It is easy to see that the set of equilibrium points of semigroup $S(t)$ is bounded in $\V$.

Let $g(s)=f(s)+Ks$, then $g^{\prime}(s)\ge 0$ and $g(s)$ is nondecreasing. Define finite cutting-off $g_n(s)$ as
\begin{equation}\label{gn}
    g_n(s)=\begin{cases}
        g(n),~~ s\geq n, \\
        g(s),~~ |s|<n,   \\
        g(-n),~~ s\le -n,
    \end{cases}
    n\in\mathbb{N}.
\end{equation}
Define the primitive function $G(s)=\int_0^s g(\tau)\td \tau$ and $G_n(s)=\int_0^s g_n(\tau)\td \tau$. Then it is easy to verify that the following inequalities hold:
$$g^{\prime}(s)-g^{\prime}_n(s)\ge 0,\quad |g(s)-g_n(s)|\le C|s|^5 \and 0\le G(s)-G_n(s)\le C|s|^6.$$

Next, we introduce the decomposition $u(t)=w_n(t)+v_n(t)+\bar{u}_0$, where $w_n(t)$ satisfies following weak damped wave equation with zero initial data
\begin{equation}\label{wn}
    \begin{cases}
        \partial_t^2 w_n+\gamma\partial_t w_n-\Delta w_n+g_n(u)-f(\bar{u}_0)-Ku=0,~~\text{in }(t_{i},\infty)\times\Omega, \\
        w_n=0,~~\text{on }(t_{i},\infty)\times \partial\Omega,                                                            \\
        w_n(t_{i},x)=0,~~ \partial_{t} w_n(t_{i},x)=0,~~\text{in }\Omega,
    \end{cases}
\end{equation}
and remainder $v_n(t)$ satisfies
\begin{equation}\label{vn}
    \begin{cases}
        \partial_t^2 v_n+\sigma_l(u)\partial_t u-\gamma\partial_t w_n-\Delta v_n+g(u)-g_n(u)=0,~~\text{in }(t_{i},\infty)\times\Omega, \\
        v_n=0,~~\text{on }(t_{i},\infty)\times\partial\Omega,                                                                          \\
        v_n(t_{i},x)=u(t_{i},x)-\bar{u}_0(x),~~ \partial_{t} v_n(t_{i},x)=u_t(t_{i},x),~~\text{in }\Omega.
    \end{cases}
\end{equation}

This decomposition and the corresponding boundary conditions facilitate the derivation of regularity estimates for the components $w_n$ and $v_n$, thereby establishing the desired regularity estimate in $\V$ for the global attractor in the sequel.

\begin{lemma}\label{lem5.3}
    Under the assumptions of Theorem \ref{regularity}, we have $(w_{n}(t), \partial_{t} w_n(t)) \in \V$ and for any $n\in \mathbb{N}$ there exists
    $T_n < 0$ such that
    \begin{equation}\label{wH2}
        \|\nabla \partial_{t} w_n(t)\|+\|\Delta w_{n}(t)\| \le  r_{0}n^{2}\|\nabla v_n(t)\|+r_0,~~\forall t_{i}\le  t\le  T_{n},
    \end{equation}
    where the positive constant $r_0$ is independent of $i$, $n$ and $(u_0, u_1)$.
\end{lemma}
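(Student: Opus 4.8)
The plan is to read \eqref{wn} as a \emph{linearly} damped wave equation for $w_n$ with the \emph{constant} coefficient $\gamma$ and a forcing that is mild for each fixed truncation level $n$, deduce the regularity $(w_n,\partial_t w_n)\in\V$ from the smoothing of the linear equation, and then extract \eqref{wH2} from an $H^2$ energy estimate while tracking how $n$ and the remainder $v_n$ enter. For the membership claim: for fixed $n$, $g_n$ is a bounded globally Lipschitz function with $g_n(0)=0$, so $\|g_n'\|_{L^\infty(\R)}<\infty$ and $\nabla g_n(u)=g_n'(u)\nabla u$ a.e.; since $z(t)=(u(t),u_t(t))$ stays in the compact set $\mathscr A\subset\H$ and the set of equilibria is bounded in $\V$, the forcing $h_n:=f(\bar u_0)+Ku-g_n(u)$ lies in $L^\infty(t_i,\infty;H^1_0(\Omega))$. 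Hence \eqref{wn}, with zero Cauchy data at $t_i$, is solved by the linear damped wave semigroup, whose smoothing --- established by the Galerkin scheme already used in Theorem~\ref{weak} and Lemma~\ref{unif_strogn_est} --- gives $(w_n,\partial_t w_n)\in L^\infty_{loc}([t_i,\infty);\V)$; this also licenses testing \eqref{wn} with $-\Delta\partial_t w_n$ at the Galerkin level and passing to the limit.

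For the quantitative bound I would test \eqref{wn} with $-\Delta\partial_t w_n$, obtaining
\begin{equation*}
\tfrac12\tfrac{\td}{\td t}\big(\|\nabla\partial_t w_n\|^2+\|\Delta w_n\|^2\big)+\gamma\|\nabla\partial_t w_n\|^2=-\int_\Omega\nabla\big(g_n(u)-f(\bar u_0)-Ku\big)\cdot\nabla\partial_t w_n\,\td x .
\end{equation*}
Using $-\Delta\bar u_0+f(\bar u_0)=\phi$ and, for $n>\|\bar u_0\|_{L^\infty}$, the identity $g_n(\bar u_0)=g(\bar u_0)$, I rewrite $g_n(u)-f(\bar u_0)-Ku=(g_n(u)-g_n(\bar u_0))-K(v_n+w_n)$ and substitute $u=w_n+v_n+\bar u_0$ into $\nabla(g_n(u)-g_n(\bar u_0))=g_n'(u)(\nabla w_n+\nabla v_n)+(g_n'(u)-g_n'(\bar u_0))\nabla\bar u_0$; the $K$-terms are harmless. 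The self-interaction $\int_\Omega g_n'(u)\nabla w_n\cdot\nabla\partial_t w_n\,\td x$ is handled not by a crude bound but by exploiting that $g=f+Ks$ is nondecreasing, so $g_n'\ge0$: it equals $\tfrac12\tfrac{\td}{\td t}\int_\Omega g_n'(u)|\nabla w_n|^2\,\td x$ plus a commutator term, the first (nonnegative) integral being absorbed into the energy functional and the commutator being controlled on the horizon $[t_i,T_n]$. The remaining contributions, involving $\nabla v_n$ and $\nabla\bar u_0$, are estimated via Hölder's inequality together with the truncation bound $|g_n'(s)|\le C(1+|s|^{p-1})$ for $|s|<n$, the elementary inequality $|s|^{p-1}\le n^{p-3}|s|^{2}$ valid there, $p\le k+3\le5$, and $H^1_0\hookrightarrow L^6$; this is where the factor $n^2$ in front of $\|\nabla v_n\|$ (and an $O(r_0)$ term from $\bar u_0$) is produced.

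After a Young inequality the right-hand side is absorbed except for a source $\lesssim r_0^2n^4(\|\nabla v_n\|^2+1)$ and the commutator, whose constant depends on $n$; since this constant is not small it cannot be absorbed into the dissipation uniformly in time, so the estimate must be closed by a continuation/bootstrap argument for the pair $(\|(w_n,\partial_t w_n)\|_{\V},\,\|\xi_{v_n}\|_{\H})$, and $T_n<0$ is chosen --- depending on $n$ but, crucially, on neither $i$ nor $(u_0,u_1)$ --- as the largest time up to which the bootstrap closes; using the exponential decay from the $\gamma$-damping and $w_n(t_i)=\partial_t w_n(t_i)=0$, one then arrives at $\|\nabla\partial_t w_n(t)\|+\|\Delta w_n(t)\|\le r_0n^2\|\nabla v_n(t)\|+r_0$ on $[t_i,T_n]$, uniformly in $i$. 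The main obstacle is precisely this last step: controlling the truncated nonlinearity $g_n(w_n+v_n+\bar u_0)$ --- in particular the $w_n$-self-interaction and the commutator coming from the lack of $C^1$-smoothness of $g_n$ --- tightly enough that the $n$-dependent constants do not spoil uniformity in $i$, which is what forces the finite horizon $T_n$ and the coupling to the remainder $v_n$ (itself small, by the choice of the sequence $\{t_i\}$, as will be used in Lemma~\ref{lem5.4}).
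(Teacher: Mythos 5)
Your proposal identifies the right difficulty (the $n$-dependent constants versus the unbounded horizon $t-t_i\to\infty$) but does not actually resolve it, and the resolution is the heart of the lemma. The paper's choice of $T_n$ is not a bootstrap: it comes from the fact that $u$ is a complete bounded trajectory on the attractor, so the dissipation integral $\int_{-\infty}^{+\infty}\|u_t(t)\|^2\,\td t\le Q(\|\mathscr A\|_{\H})$ is finite, and one can therefore pick $T_n<0$ with $n^{8}\int_{-\infty}^{T_n}\|u_t\|^2\,\td t\le 1$. This is what makes the forcing small uniformly in $i$ and in $(u_0,u_1)$. To exploit it, the paper first \emph{differentiates} \eqref{wn} in $t$ and tests with $\partial_t^2 w_n$, so the nonlinearity enters only through $(g_n'(u)-K)u_t$, whose $L^2_tL^2_x$ norm is exactly the quantity tamed by the choice of $T_n$; the $H^2$ bound on $w_n$ is then read off \emph{pointwise in $t$} from the elliptic identity $-\Delta w_n+g_n(u)=f(\bar u_0)+Ku-\partial_t^2w_n-\gamma\partial_t w_n$, where the self-interaction $\int_\Omega g_n'(u)|\nabla w_n|^2\,\td x\ge 0$ is simply discarded by monotonicity and the cross term with $\nabla v_n$ produces the factor $n^2\|\nabla v_n(t)\|$. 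Your "largest time up to which the bootstrap closes" does not explain why such a $T_n$ exists independently of $i$, which is precisely what needs proving.

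Your specific technical route also runs into trouble. Testing with $-\Delta\partial_t w_n$ and writing $\int_\Omega g_n'(u)\nabla w_n\cdot\nabla\partial_t w_n\,\td x=\tfrac12\tfrac{\td}{\td t}\int_\Omega g_n'(u)|\nabla w_n|^2\,\td x$ plus a commutator requires differentiating $g_n'(u)$ in time; but $g_n$ is only Lipschitz (its derivative jumps at $s=\pm n$), so the commutator involves a singular measure, and even its absolutely continuous part $\int_\Omega g''(u)u_t|\nabla w_n|^2\,\td x$ is of size $\|u_t\|\,\|\Delta w_n\|^2$, which cannot be absorbed or integrated over $[t_i,T_n]$ as $i\to\infty$ (only $\int\|u_t\|^2$ is finite, not $\int\|u_t\|$). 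Likewise, the term $\int_\Omega g_n'(u)\nabla v_n\cdot\nabla\partial_t w_n\,\td x$ forces a Young inequality leaving $Cn^{8}\int_{t_i}^{t}\|\nabla v_n\|^2\,\td s$ on the right, which grows like $t-t_i$; and even if closed via exponential decay of a perturbed energy, the resulting bound would involve $\sup_{s\le t}\|\nabla v_n(s)\|$ rather than the pointwise coupling $\|\nabla v_n(t)\|$ asserted in \eqref{wH2}. The paper's two-tier argument (time-differentiated hyperbolic estimate for $\nabla\partial_t w_n$, then static elliptic regularity for $\Delta w_n$) is designed precisely to avoid both obstructions.
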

\begin{proof}
    {\it(i) The $H^1_0(\Omega)\times H^1_0(\Omega)$ estimate.}

    Differentiating \eqref{wn} with respect to $t$, multiplying both sides of the derived equation by $\partial_t^2 w_n$ and integrating over $(t_{i}, T)\times \Omega$ implies
    \begin{equation}
        \begin{aligned}
                & \|\partial_t^2 w_n(t)\|^2+\|\nabla \partial_t w_{n}(t)\|^2                                                                                    \\
            \le & C\left(\|\partial_t^2 w_n(t_{i})\|^2+\|\nabla \partial_t w_{n}(t_{i})\|^2+\int_{t_{i}}^{t}\|(g_n^{\prime}(u(t))-K)u_t(t)\|^{2}\td t+1\right).
        \end{aligned}
    \end{equation}
    According to the definition of $g_n$ and growth condition \eqref{F1}, it follows that
    \begin{equation}\label{fnp}
        |g_n^{\prime}(u(t))|\le Cn^4,
    \end{equation}
    which yields
    \begin{equation}\label{5.19}
        \begin{aligned}
             & \|\partial_t^2 w_n(t)\|^2+\|\nabla \partial_t w_{n}(t)\|^2\le C\left(\|\partial_t^2 w_n(t_{i})\|^2+(n^{8}+K)\int_{t_{i}}^{t}\|u_{t}(t)\|^{2}\td t+1\right)
        \end{aligned}
    \end{equation}
    for any $t\ge t_{i}$.

    From the equation \eqref{wn}, since
    $$
        \begin{aligned}
            \partial_t^2 w_n(t_{i}) & =f(\bar{u}_0)-g_n(u(t_{i}))+Ku(t_{i})                                            \\
                                    & =g(\bar{u}_0)-g_n(u(t_{i}))+K(u(t_{i})-\bar{u}_0)                                \\
                                    & =g(\bar{u}_0)-g_n(\bar{u}_0)+g_n(\bar{u}_0)-g_n(u(t_{i}))+K(u(t_{i})-\bar{u}_0), \\
        \end{aligned}$$
    then
    \begin{equation}
        \begin{aligned}
            \|\partial_t^2 w_n(t_{i})\| & \le \|g(\bar{u}_0)-g_n(\bar{u}_0)\|+\|g_n(\bar{u}_0)-g_n(u(t_{i}))\|+K\|u(t_{i})-\bar{u}_0\| \\
                                        & \le Cn^2\|\bar{u}_0-u(t_{i})\|_6+C.
        \end{aligned}
    \end{equation}
    Since $\|\bar{u}_0-u(t_{i})\|_6\to 0$ as $i\to \infty$, then for any $n\in \mathbb{N}$ there exists $M_n\in \mathbb{N}$, such that $n^2\|\bar{u}_0-u(t_{i})\|_6\le 1$ for all $i>M_n$, therefore
    \begin{equation}\label{5.22}
        \|\partial_t^2 w_n(t_{i})\|\le C_1, ~~ \forall i>M_n,
    \end{equation}
    where the positive constant $C_1$ are independent of $i$, $n$ and $(u_0, u_1)$.

    Since $u(t)$ is a full time bounded solution, then by energy estimate \eqref{weak_energy_est}, we have the dissipativity relation
    $$\int_{-\infty}^{+\infty}\int_{\Omega}\sigma_l(u(t))|u_{t}|^2\td x\td t\le Q(\|\mathscr{A}\|_{\H}).$$
    Then by \eqref{S1}
    \begin{equation}\label{5.16}
        \int^{+\infty}_{-\infty}\|u_{t}(t)\|^{2}\td t\le \frac{1}{\sigma_0}\int_{-\infty}^{+\infty}\int_{\Omega}\sigma_l(u)|u_{t}|^2\td x\td t\le Q(\|\mathscr{A}\|_{\H}).
    \end{equation}
    Consequently, for any $n \in \mathbb{N}$ there exists $T_n<t_{M_n}<0$ such that
    \begin{equation}\label{5.24}
        n^{8}\int_{-\infty}^{T_n}\|u_{t}(t)\|^{2}\td t\le 1.
    \end{equation}
    Substituting \eqref{5.22} and \eqref{5.24} into \eqref{5.19}, one obatins
    \begin{equation}\label{}
        \|\partial_t^2 w_n(t)\|+\|\nabla \partial_t w_{n}(t)\| \le  R_{1},~~\text{for any  } t_{i}\le  t\le  T_n,
    \end{equation}
    where the positive constant $R_1$ are independent of $i$, $n$ and $(u_0, u_1)$.

        {\it(ii) $H^1_0(\Omega)$ estimate of $w_{n}(t)$.}

    Multiplying both sides of \eqref{wn} by $\partial_{t} w_n+\al w_n$, where $\al \in (0, 1)$, and integrating over $\Omega$, it follows that
    \begin{equation}\label{Delta_w}
        \begin{aligned}
             & \frac{\td}{\td t}\Phi_{n}(t) +\al\Phi_{n}(t)+(\gamma-\frac{3\al}{2})\|\partial_{t} w_n\|^2 +\frac\al2\|\nabla w_{n}\|^2                               \\
             & +(\al \gamma-\al^2)\int_\Omega \partial_{t} w_{n} w_{n}\td x-\int_\Omega g_n^\prime(u)u_t w_{n}\td x-\int_\Omega Ku(\partial_{t} w_n+\al w_n)\td x=0,
        \end{aligned}
    \end{equation}
    in which
    \begin{equation*}
        \Phi_{n}(t)={\frac{1}{2}}\| \partial_t w_{n}\|^{2}+{\frac{1}{2}}\|\nabla w_{n}\|^{2}+\al\int_\Omega\partial_{t}w_{n} w_{n}\td x+\int_\Omega g_n(u) w_{n}\td x-\int_\Omega f(\bar{u}_0) w_{n}\td x,
    \end{equation*}
    Since $u\in \mr{A}$ is bounded in $H^1_0(\Omega)$, one has
    \begin{equation}\label{5.34}
        \Phi_{n}(t)\ge \frac14\left(\|\partial_t w_{n}(t)\|^{2}+\|\nabla w_{n}(t)\|^{2}\right)-C_3,~~ t_{i}\le  t\le  T_{n}.
    \end{equation}
    Here, we just need the lower bound estimate of $\Phi_n(t)$ since $\Phi_n(t_{i})=0$ .

    Regarding the first term on the last line of \eqref{Delta_w}, the Cauchy inequality yields
    \begin{equation}\label{5.21}
        (\al \gamma-\al^2)\int_\Omega |w_{n} \partial_{t} w_n|\td x \le C\al(\gamma-\al)^2\int_\Omega |\partial_{t} w_n|^2\td x+\frac\al6\|\nabla w_{n}\|^2 .
    \end{equation}
    Applying H\"older's inequality to the second and fourth terms on the last line of \eqref{Delta_w} gives
    \begin{equation}\label{5.38}
        \left|\int_\Omega g_n(u)u_t w_{n}\td x\right| \le \|g_n(u)\|_{L^3(\Omega)}\|u_t\| \|w_{n}\|_{L^6(\Omega)}\le Cn^8\|u_t\|^2+\frac{\al}6\|\nabla w_{n}\|^2,
    \end{equation}
    and
    \begin{equation}
        \begin{aligned}
            \left|\int_\Omega Ku(\partial_{t} w_n+\al w_n)\td x\right| & \le CK\|u\|^2+\al\|\partial_{t} w_n\|^2+\frac{\al}6\|\nabla w_n\|^2 \\
                                                                       & \le  C+\al\|\partial_{t} w_n\|^2+\frac{\al}6\|\nabla w_n\|^2.
        \end{aligned}
    \end{equation}
    Choosing $\al$ be small enough in \eqref{5.38}, and combining with \eqref{Delta_w} and \eqref{5.21} results in
    \begin{equation}\label{}
        \begin{aligned}
            \frac{\td}{\td t} \Phi_{n}(t)+ \al \Phi_{n}(t)+\al \|\partial_{t} w_n\|^2 \le Cn^8\|u_t\|^2+C.
        \end{aligned}
    \end{equation}
    Taking into account \eqref{5.24}, it can be drived from Gronwall's Lemma that
    \begin{equation}\label{5.40}
        \Phi_{n}(t)+\int_{t_i}^{T_n}\|\partial_{t} w_n\|^2\td t\le C_4,~~ t_{i}\le  t \le  T_{n}.
    \end{equation}
    which together with \eqref{wn} yields
    \begin{equation}\label{5.26}
        \|\partial_t^2 w_n(t)\|+\|\nabla \partial_{t} w_n(t)\|+\|\nabla w_{n}(t)\| \le  R_2,~~\text{for any  } t_{i}\le  t\le  T_n,
    \end{equation}
    where the positive constant $R_2$ are independent of $i$, $n$ and $(u_0, u_1)$.

        {\it(iii) $H^2(\Omega)$ estimate of $w_{n}(t)$.}

    Rewrite the equation \eqref{wn} as $-\Delta w_n+g_n(u)=f(\bar{u}_0)+Ku-\partial_t^2 w_n-\gamma\partial_t w_n$.
    Notice that, by \eqref{5.26}, the terms on the RHS of the elliptic equation and also $f_n(u)$ are in $L^{\infty}(t_{i}, T_n;L^2(\Omega))$. Thus, by squaring both sides of the above equation and integrating on $\Omega$, it follows that
    \begin{equation}\label{5.27}
        \|\Delta w_n\|^2+\|g_n(u)\|^2+2\int_\Omega g_n^\prime(u_n)\nabla u\nabla w_n\td x=\|f(\bar{u}_0)+Ku-\partial_t^2 w_n-\gamma\partial_t w_n\|^2.
    \end{equation}
    Since
    \begin{equation}\label{5.28}
        \begin{aligned}
            \int_\Omega g_n^\prime(u_n)\nabla u\nabla w_n\td x & =\int_\Omega g_n^\prime(u_n)|\nabla w_n|^2\td x+\int_\Omega g_n^\prime(u_n)\nabla v_n\nabla w_n\td x \\
                                                               & \ge -C\|g_n^\prime(u_n)\|^2_{L^3(\Omega)}\|\nabla v_n\|^2-\frac12\|\Delta w_n\|^2                    \\
                                                               & \ge -Cn^4\|\nabla v_n\|^2-\frac12\|\Delta w_n\|^2.
        \end{aligned}
    \end{equation}
    Combining with \eqref{5.26}-\eqref{5.28}, yields
    $$\|\Delta w_{n}(t)\|^2\le Cn^4\|\nabla v_n(t)\|^2+C, ~~\text{for any  } t_{i}\le  t\le  T_n$$
    which finishes the proof.
\end{proof}

\begin{lemma}\label{lem5.4}
    Under the assumptions of Theorem \ref{regularity}, there exists a $n_0 \in \mathbb{N}$ with the following uniform decay property: for any damping parameter $l\ge n_0$, there holds
    \begin{equation}\label{5.41}
        \lim_{i\to \infty}\sup_{t_{i}\le t\le T_{n_0}}\left(\|\partial_t v_{n_0}(t)\|+\|\nabla v_{n_0}(t)\|\right)=0.
    \end{equation}
\end{lemma}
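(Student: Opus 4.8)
The plan is to derive a Gronwall-type inequality for the energy of $v:=v_{n_0}$ by testing \eqref{vn} with $\partial_t v+\al v$ for a small $\al>0$. The key observation is that, substituting $\partial_t u=\partial_t w_{n_0}+\partial_t v$ into the damping term $\sigma_l(u)\partial_t u$ and combining with the $-\gamma\partial_t w_{n_0}$ already present in \eqref{vn}, equation \eqref{vn} takes the form
$$\partial_t^2 v+\sigma_l(u)\partial_t v-\Delta v=(\gamma-\sigma_l(u))\partial_t w_{n_0}-(g(u)-g_{n_0}(u)),$$
so the genuine dissipation $\int_\Omega\sigma_l(u)|\partial_t v|^2\tdx\ge\sigma_0\|\partial_t v\|^2$ is exposed on the left, the ``large'' factor $\partial_t w_{n_0}$ now carries the coefficient $\gamma-\sigma_l(u)$, which \emph{vanishes on $\{|u|\le l\}$} by \eqref{S3}, and the remaining forcing $g(u)-g_{n_0}(u)$ is supported on $\{|u|>n_0\}$. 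I would use three facts: (a) the datum of $v$ at $t_i$ is $(u(t_i)-\bar u_0,u_t(t_i))=z(t_i)-z_0\to 0$ in $\H$, while $\bar u_0$ is an equilibrium, hence lies in the bounded-in-$\V$ set of equilibria, hence in $C(\bar{\Omega})$ with $\|\bar u_0\|_{C(\bar{\Omega})}$ controlled; (b) $\mr{A}$ is compact in $\H$, so $\{|u|^6:(u,u_t)\in\mr{A}\}$ is uniformly integrable and $\eta(N):=\sup_{(u,u_t)\in\mr{A}}\int_{|u|>N}|u|^6\tdx\to 0$ as $N\to\infty$; (c) by the $H^1_0$-level bound \eqref{5.26} established within the proof of Lemma \ref{lem5.3}, $\|\nabla\partial_t w_{n_0}(t)\|$ is bounded on $[t_i,T_{n_0}]$ by a constant independent of $n_0$ and $i$, so $\partial_t w_{n_0}$ is bounded in $L^6(\Omega)$ by some $C_0$ not depending on $n_0$.

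First I would set up the energy. Testing with $\partial_t v+\al v$ and moving $\int_\Omega(g(u)-g_{n_0}(u))\partial_t u\,\tdx=\frac{\td}{\td t}\int_\Omega(G(u)-G_{n_0}(u))\tdx$ into the functional
$$\mathcal E_\al(t)=\tfrac12\|\partial_t v\|^2+\tfrac12\|\nabla v\|^2+\al\la v,\partial_t v\ra+\int_\Omega(G(u)-G_{n_0}(u))\tdx,$$
which satisfies $\tfrac14(\|\partial_t v\|^2+\|\nabla v\|^2)\le\mathcal E_\al\le C(\|\partial_t v\|^2+\|\nabla v\|^2)+C\eta(n_0)$ (since $0\le G(s)-G_{n_0}(s)\le C|s|^6$ and vanishes for $|s|\le n_0$), and using $\sigma_l\ge\sigma_0$, one gets for $\al$ small
$$\frac{\td}{\td t}\mathcal E_\al+c\al\,\mathcal E_\al+\tfrac12\int_\Omega\sigma_l(u)|\partial_t v|^2\tdx\le K(t),$$
where $K(t)$ gathers the cross terms coming from $(\gamma-\sigma_l(u))\partial_t w_{n_0}$, from $g(u)-g_{n_0}(u)$, and from the $\al$-multiplier.

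The heart of the argument is the estimate of $K(t)$. Each term containing $(\gamma-\sigma_l(u))$ is an integral over $\{|u(t)|>l\}$; bounding $|\gamma-\sigma_l(u)|\le\gamma+\sigma_l(u)$ and using $\|\partial_t w_{n_0}\|_{L^6}\le C_0$, the embedding $H^1_0\hookrightarrow L^6$ for $v$, and — this is where $m\le 4$ enters decisively — that $\sigma_l(u)\in L^{3/2}(\Omega)$ with $\|\sigma_l(u)\chi_{\{|u|>l\}}\|_{L^{3/2}}\le C(|\{|u|>l\}|+\int_{|u|>l}|u|^6\tdx)^{2/3}\to 0$ uniformly over $\mr{A}$ as $l\to\infty$ (by (b), since $3m/2\le 6$), these contributions are $\le\tfrac14\int_\Omega\sigma_l(u)|\partial_t v|^2\tdx+\tfrac{c\al}{4}\mathcal E_\al+C_0^2\varrho_1(l)$ with $\varrho_1(l)\to0$. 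The terms with $g(u)-g_{n_0}(u)$, supported on $\{|u(t)|>n_0\}$, are handled through $\|g(u)-g_{n_0}(u)\|_{L^{6/5}}\le C(\int_{|u|>n_0}|u|^6\tdx)^{5/6}=:C\varrho_2(n_0)$ (using $|g(s)-g_{n_0}(s)|\le C|s|^5$): paired with $\partial_t w_{n_0}$ this gives $C_0\varrho_2(n_0)$, paired with $\al v$ it gives $\tfrac{c\al}{4}\mathcal E_\al+\al\varrho_2(n_0)^2$, while the remaining $\al$-multiplier cross term $\al\la\sigma_l(u)\partial_t v,v\ra$ is absorbed using again $\sigma_l(u)\in L^{3/2}$. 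Absorbing the $\int\sigma_l(u)|\partial_t v|^2$ and $\mathcal E_\al$ pieces into the left-hand side, one arrives at $\frac{\td}{\td t}\mathcal E_\al+\tfrac{c\al}{2}\mathcal E_\al\le\bar K$ with $\bar K:=C\eta(n_0)+C_0^2\varrho_1(l)+C_0\varrho_2(n_0)+\al\varrho_2(n_0)^2$, which (with $\al$ fixed) is made as small as we please by taking $n_0$ and then $l$ large.

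Finally, Gronwall on $[t_i,t]\subset[t_i,T_{n_0}]$ gives $\mathcal E_\al(t)\le e^{-\frac{c\al}{2}(t-t_i)}\mathcal E_\al(t_i)+\tfrac{2\bar K}{c\al}$, so $\limsup_{i\to\infty}\sup_{t_i\le t\le T_{n_0}}\mathcal E_\al(t)\le\tfrac{2\bar K}{c\al}$, since $\mathcal E_\al(t_i)=\tfrac12\|u_t(t_i)\|^2+\tfrac12\|\nabla(u(t_i)-\bar u_0)\|^2+\al\la u(t_i)-\bar u_0,u_t(t_i)\ra+\int_\Omega(G(u(t_i))-G_{n_0}(u(t_i)))\tdx\to0$ by (a) (the last term being $\le C\|u(t_i)-\bar u_0\|_6^6\to0$ once $n_0>\|\bar u_0\|_{C(\bar{\Omega})}+1$). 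Choosing $\al$ first (depending only on $\sigma_0,C_0$ and the Sobolev constants), then $n_0$ large (also $>\|\bar u_0\|_{C(\bar{\Omega})}+1$) and $l\ge n_0$ large so that $\bar K$ is as small as needed — and using that $u(s)\to\bar u_0$ as $s\to-\infty$ forces $K(s)\to0$ as $s\to-\infty$, so that the Gronwall integral $\int_{t_i}^t e^{-\frac{c\al}{2}(t-s)}K(s)\td s$ is genuinely small on $[t_i,T_{n_0}]$ — one upgrades this to $\limsup_{i\to\infty}\sup_{t_i\le t\le T_{n_0}}\mathcal E_\al(t)=0$, which yields \eqref{5.41} because $\mathcal E_\al\gtrsim\|\partial_t v_{n_0}\|^2+\|\nabla v_{n_0}\|^2$. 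I expect the main obstacle to be precisely the control of the $\partial_t w_{n_0}$-cross terms: one must invoke the $n_0$-\emph{independent} $H^1_0$-bound on $\partial_t w_{n_0}$ from Lemma \ref{lem5.3} (and not its $H^2$-bound \eqref{wH2}, whose constant blows up like $n_0^2$), the restriction of the $\sigma_l$-terms to $\{|u|>l\}$, the borderline integrability $\sigma_l(u)\in L^{3/2}$ made available by $m\le4$, and the right interplay between the choices of $\al,n_0,l$ and that of $T_{n_0}$ in \eqref{5.24} (which compensates the $n_0^8$ factor arising in Lemma \ref{lem5.3}), so that the Gronwall estimate closes with a negligible right-hand side.
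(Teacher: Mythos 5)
Your setup coincides with the paper's: the same test function $\partial_t v_n+\al v_n$, the same rewriting $\sigma_l(u)\partial_t u-\gamma\partial_t w_n=\sigma_l(u)\partial_t v_n+(\sigma_l(u)-\gamma)\partial_t w_n$, the same absorption of $G-G_{n_0}$ into the energy functional, and the same $L^{3/2}$-control of $\sigma_l(u)-\gamma$ on $\{|u|>l\}$. Up to the differential inequality $\frac{\td}{\td t}\mathcal E_\al+c\al\,\mathcal E_\al\le K(t)$ with $\sup K\le\bar K$ small, you are on the paper's track. But there is a genuine gap at the conclusion: the lemma asserts that the limit is \emph{exactly zero} for a \emph{fixed} $n_0$, whereas your Gronwall estimate only yields $\limsup_{i\to\infty}\sup_{t_i\le t\le T_{n_0}}\mathcal E_\al(t)\le 2\bar K/(c\al)$, and $\bar K$ is a fixed \emph{positive} number once $n_0$ and $l$ are fixed (it is built from $\sup_{\mr A}\int_{\{|u|>n_0\}}|u|^6$, which cannot vanish unless $\mr A$ is already known to be bounded in $L^\infty$ --- the very thing being proved). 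Your attempted upgrade via ``$u(s)\to\bar u_0$ as $s\to-\infty$ forces $K(s)\to0$'' fails twice over: first, only subsequential convergence $z(t_i)\to z_0$ is established (a gradient system with a continuum of equilibria need not have its full backward trajectory converge to a single point); second, and decisively, the supremum is taken over $[t_i,T_{n_0}]$ with \emph{fixed right endpoint} $T_{n_0}$, so at $t$ near $T_{n_0}$ the Gronwall integral $\int_{t_i}^t e^{-c\al(t-s)/2}K(s)\,\td s$ retains an $O(\bar K/\al)$ contribution from $s$ near $T_{n_0}$, where $K$ is not small.

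The missing idea is the paper's bootstrap, which you explicitly (and incorrectly) set aside: one \emph{must} use the $H^2$-estimate \eqref{wH2} of Lemma \ref{lem5.3}, $\|\Delta w_n\|\le r_0 n^2\|\nabla v_n\|+r_0$, despite its $n^2$ factor. Combined with Agmon's inequality and the preliminary Gronwall bound $\|\nabla v_n\|^2\lesssim\al^{-1}\sup N_n$, this gives $\|w_n\|_\infty\le C_5 n\sup N_n^{1/4}+C_5\le n/3+C_5$ once $n$ is large enough that the tail $\sup_{t}\int_{A_{n}(t)}|u|^6$ is small (uniform integrability from compactness of $\mr A$ absorbs the $n$ factor). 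The resulting pointwise bound forces the inclusion $A_n(t)\subset\{|v_n|\ge|w_n|+B_{\V}\}$, which lets one dominate the forcing by the solution itself, $N_n(s)\le C_9\left(\int_{A_n}|u|^6\right)^{2/3}\|\nabla v_n\|^2\le\frac{\al}{8}\|\nabla v_n\|^2$ for $n=n_0$ large. The differential inequality then becomes \emph{homogeneous}, $\frac{\td}{\td t}\Psi_\al+\frac{\al}{2}\Psi_\al\le0$, so $\Psi_\al(t)\le\Psi_\al(t_i)e^{-\al(t-t_i)/2}\to0$ uniformly on $[t_i,T_{n_0}]$ as $i\to\infty$, which is what \eqref{5.41} requires. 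Without this self-improving step your argument cannot reach the zero limit.
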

\begin{proof}
    We initiate our analysis by multiplying both sides of equation \eqref{vn} by the test function $\partial_{t} v_n + \alpha v_n$ with $\alpha \in (0,1)$, followed by integration over $\Omega$. This yields the fundamental energy identity:
    \begin{equation}\label{5.30}
        \begin{aligned}
             & {\frac{\td}{\td t}}\left(\frac12\|\partial_{t} v_n\|^2+\frac12\|\nabla v_{n}\|^2+\al \langle \partial_{t} v_n,v_{n}\rangle\right)+\al \|\nabla v_{n}\|^{2}-\al\| \partial_{t} v_n\|^{2} \\
             & +\int_\Omega \left(\sigma_l(u)u_t-\gamma \partial_t w_n\right)(\partial_{t} v_n+\al v_{n})\td x =\int_\Omega \(g_n(u)-g(u)\)\(\partial_{t} v_n + \al v_n\)\td x.
        \end{aligned}
    \end{equation}

    For the right-hand side terms involving $g_n(u)-g(u)$, we employ Sobolev embeddings:
    \begin{equation}\label{eq5.31}
        \begin{aligned}
             & \int_\Omega \(g_n(u)-g(u)\)\partial_{t} v_n\td x                                                                  
              =\int_\Omega \(g_n(u)-g(u)\)(u_t-\partial_{t} w_n)\td x                                                           \\
              =& \frac{\td}{\td t}\int_\Omega \(G_n(u)-G(u)\)\td x -\int_\Omega \(g_n(u)-g(u)\) \partial_{t} w_n\td x            \\
              \le& \frac{\td}{\td t}\int_\Omega \(G_n(u)-G(u)\)+ C\|g_n(u)-g(u)\|_{L^{\frac65}(\Omega)} \|\partial_{t} w_n\|_{6} \\
              \le& \frac{\td}{\td t}\int_\Omega \(G_n(u)-G(u)\)+ C\|g_n(u)-g(u)\|_{L^{\frac65}(\Omega)}.
        \end{aligned}
    \end{equation}
    Additionally, we estimate
    \begin{equation}\label{eq5.32}
        \begin{aligned}
            \al\int_\Omega \(g_n(u)-g(u)\) v_n & \le\al\|g_n(u)-g(u)\|_{L^{\frac65}(\Omega)}\|v_n(t)\|_6                      \\
                                               & \le C\|g_n(u)-g(u)\|^2_{L^{\frac65}(\Omega)}+\frac{\al}4\|\nabla v_n(t)\|^2.
        \end{aligned}
    \end{equation}
    We decompose the fourth left-hand term using the structure of $\sigma_l(u)$,
    \begin{equation}\label{eq5.33}
        \begin{aligned}
                 & \int_\Omega \left(\sigma_l(u)u_t-\gamma \partial_t w_n\right)\partial_{t} v_n\td x                                                       \\
            =    & \int_\Omega \left(\sigma_l(u)\partial_{t} v_n+(\sigma_l(u)-\gamma) \partial_t w_n\right)\partial_{t} v_n\td x                            \\
            =    & \int_\Omega \sigma_l(u)|\partial_{t} v_n|^2\td x+\int_\Omega \(\sigma_l(u)-\gamma\)\partial_t w_n \partial_{t} v_n\td x                  \\
            \geq & \int_\Omega \sigma_l(u)|\partial_{t} v_n|^2\td x-\frac{\sigma_0}{6\gamma}\int_\Omega |\sigma_l(u(t))-\gamma||\partial_{t} v_n(t)|^2\td x \\
                 & -C\|\sigma_l(u(t))-\gamma\|_{L^{\frac32}(\Omega)} \|\partial_t w_n(t)\|^2_{L^6(\Omega)}                                                  \\
            \geq & \frac34\int_\Omega \sigma_l(u)|\partial_{t} v_n|^2\td x-C\|\sigma_l(u)-\gamma\|_{L^{\frac32}(\Omega)}.
        \end{aligned}
    \end{equation}
    Next, consider the contribution from $v_n$,
    \begin{equation}\label{eq5.34}
        \begin{aligned}
                & \int_\Omega \left(\sigma_l(u)u_t-\gamma \partial_t w_n\right)v_{n}\td x                                                   \\
            =   & \int_\Omega \left(\sigma_l(u)\partial_{t} v_n+(\sigma_l(u)-\gamma) \partial_t w_n\right)v_n\td x                          \\
            =   & \int_\Omega \sigma_l(u)\partial_{t} v_n v_n\td x+\int_\Omega \(\sigma_l(u)-\gamma\)\partial_{t} w_n v_n\td x              \\
            \ge & -C\int_\Omega \sigma_l(u(t))|\partial_{t} v_n(t)|^2\td x-C\int_\Omega |\sigma_l(u)-\gamma||v_n|^2\td x                    \\
                & -\|\sigma_l(u)-\gamma\|_{L^{\frac32}(\Omega)}\|\partial_{t} w_n\|_{L^6(\Omega)}-\frac{\lambda_1}{4}\|v_n\|^2              \\
            \ge & -C\int_\Omega \sigma_l(u)|\partial_{t} v_n|^2\td x-C\|\sigma_l(u)-\gamma\|_{L^{\frac32}(\Omega)}-\frac14\|\nabla v_n\|^2.
        \end{aligned}
    \end{equation}
    Combining \eqref{eq5.33} and \eqref{eq5.34} with parameter $\alpha$ be chosen small enough yields,
    \begin{equation}\label{eq5.36}
        \begin{aligned}
                & \int_\Omega \left(\sigma_l(u)u_t-\gamma \partial_t w_n\right)(\partial_{t} v_n+\al v_{n})\td x                          \\
            \ge & \frac34\int_\Omega \sigma_l(u)|\partial_{t} v_n|^2\td x-C\|\sigma_l(u)-\gamma\|_{L^{\frac32}(\Omega)}                   \\
                & -\al C\int_\Omega \sigma_l(u)|\partial_{t} v_n|^2\td x-\frac\al4\|\nabla v_n\|^2                                        \\
            \ge & \frac\al2\int_\Omega |\partial_{t} v_n|^2\td x-C\|\sigma_l(u)-\gamma\|_{L^{\frac32}(\Omega)}-\frac\al4\|\nabla v_n\|^2.
        \end{aligned}
    \end{equation}
    Substituting \eqref{eq5.31} \eqref{eq5.32} and \eqref{eq5.36} into \eqref{5.30}, we derive
    \begin{equation}\label{5.44}
        \begin{aligned}
             & {\frac{\td}{\td t}}\Psi_\al(t)+\al \Psi_\al(t)\le  N_n(t),
        \end{aligned}
    \end{equation}
    where the energy functional $\Psi_\al(t)$ and perturbation term $N_n(t)$ are defined as
    \begin{equation}
        \Psi_\al(t)=\frac12\|\partial_{t} v_n\|^2+\frac12\|\nabla v_{n}\|^2+\al \langle \partial_{t} v_n,v_{n}\rangle+\int_\Omega \(G(u)-G_n(u)\)\td x
    \end{equation}
    and
    \begin{equation}\label{eq5.39}
        N_n(t)=C\|g(u)-g_n(u)\|_{L^{\frac65}(\Omega)}+\al\int_\Omega \(G(u)-G_n(u)\)\td x+C\|\sigma_l(u)-\gamma\|_{L^{\frac32}(\Omega)}.
    \end{equation}
    Then by Gronwall lemma, we obtain
    \begin{equation}
        \Psi_\al(t) \le \Psi_\al(t_i)e^{-\al(t-t_i)}+\frac{C_4}{\al}\sup_{t_i\le s\le T_n}N_n(s).
    \end{equation}
    Recall that initial value $(v_n(t_i), \partial_{t} v_n(t_i))=(u(t_i)-\bar{u}_0, u_t(t_i)) \to (0, 0)$ in $\H$ as $i\to \infty$. Thus there exists a positive integer $M^\prime_n>M_n$ such that for all $i>M^\prime_n$,
    \begin{equation}
        \Psi_\al(t_i)\le \frac{C_4}{\al}\sup_{t_i\le s\le T_n}N_n(s).
    \end{equation}
    Thus, for $i>M^\prime_n$ and $t\ge t_i$,
    \begin{equation}
        \Psi_\al(t) \le \frac{2C_4}{\al}\sup_{t_i\le s\le T_n}N_n(s).
    \end{equation}

    On the other hand, combining with \eqref{wH2} and Agmon inequality $\|w_n\|_{\infty}^2\le c\|\nabla w_n\|\cdot\|\Delta w_n\|$, we derive
    \begin{equation}\label{}
        \|w_n\|_{\infty}\le  Cn\sup_{t_i\le s\le T_n}\|\nabla v_n(s)\|^{1/2}+C\le C_5 n\sup_{t_i\le s\le T_n}N^{\frac14}_n(s)+C_5.
    \end{equation}
    Let $A_n(t)=\{x\in \Omega : |u(t,x)|\ge n\}$. From the definition of $g_n$ and \eqref{F1}, we can obtain that
    \begin{equation}\label{eq5.44}
        C\|g(u(t))-g_n(u(t))\|_{L^{\frac65}(\Omega)}\le  C_6\(\int_{A_{n}(t)} |u(t,x)|^{6} \td x\)^{\frac56},
    \end{equation}
    and
    \begin{equation}\label{eq5.45}
        \int_\Omega \(G(u)-G_n(u)\)\td x \le  C_7\int_{A_{n}(t)} |u(t,x)|^{6} \td x.
    \end{equation}
    According to condition \eqref{S3}, $\sigma_l(u)=\gamma$ for $|u|\le l$, which yields
    \begin{equation}\label{eq5.46}
        \begin{aligned}
            C\|\sigma_l(u(t))-\gamma\|_{L^{\frac32}(\Omega)} & =C\(\int_{\Omega}|\sigma_l(u(t))-\gamma|^{\frac32}\td x\)^{\frac23}                         \\
                                                             & = C\(\int_{\{x\in \Omega :  |u(t,x)|>l\}}|\sigma_l(u(t))-\gamma|^{\frac32}\td x\)^{\frac23} \\
                                                             & \le C_8\(\int_{\{x\in \Omega :  |u(t,x)|>l\}} |u(t,x)|^{6}\td x\)^{\frac23}.
        \end{aligned}
    \end{equation}
    Under the condition $l\ge n$, the set inclusion $\{x\in \Omega : |u(t,x)|>l\}\subset A_{n}(t)$ implies
    \begin{equation}\label{eq5.47}
        \begin{aligned}
            C\|\sigma_l(u(t))-\gamma\|_{L^{\frac32}(\Omega)} \le C_8\(\int_{A_{n}(t)} |u(t,x)|^{6}\td x\)^{\frac23}.
        \end{aligned}
    \end{equation}
    By the compactness of $\mathscr{A}$, there exists $n_1 \in \mathbb{N}$ which is independent of $u(t)$, such that
    \begin{equation}\label{eq5.48}
        \sup_{t\in \R}\int_{A_{n_1}(t)} |u(t,x)|^6 \td x\le \(\frac{1}{3C_5(C_6+C_7+C_8)+1}\)^{4}.
    \end{equation}
    This guarantees
    $$C_5\sup_{t_i\le s\le T_n}N^{\frac14}_n(s)\le \frac13.$$
    Consequently, the uniform bound for $w_n$ follows
    \begin{equation}\label{5.52}
        \|w_n(t)\|_{\infty}\le  \frac n3+C_5,~~\forall t_i\le  t\le  T_{n_1}.
    \end{equation}
    Let $B_{\V}$ be the infum upper bound in the $\V$-norm of all equilibrium points of semigroup $S(t)$.
    Since
    $$|u(t)|=|w_n(t)+v_n(t)+\bar{u}_0|\le |v_n(t)|+\frac n3+C_5+ B_{\V},$$
    then for $n\ge n_1+6C_5+6B_{\V}$ and $x\in A_n(t)$, we have
    $$\begin{aligned}
            |v_n(t)| & \ge |u(t)|-\frac n3-C_5-B_{\V}\ge \frac {2n}3-C_5-B_{\V} \\
                     & \ge \frac {n}3+C_5+B_{\V}\ge|w_n(t)|+B_{\V}.
        \end{aligned}$$
    This yields the inclusion
    $A_n(t)\subset \hat{A}_n(t):=\{x\in \Omega : |v_n(t)|\ge |w_n(t)|+B_{\V}\}.$
    This inclusion permits the refined bound of $N_n(s)$,
    \begin{equation}\label{5.49}
        \begin{aligned}
            N_n(s)\le & (C_6+C_7+C_8)\int_{A_{n}(t)}|u(t)|^{6}\td x                                                                      \\
            \le       & C\left(\int_{A_n(t)} |u(t)|^6 \td x\right)^{\frac23}\left(\int_{\hat{A}_n(t)}|u(t)|^6\td x\right)^{\frac13}      \\
            \le       & C\left(\int_{A_{n}(t)}|u(t)|^{6}\td x\right)^{\frac23} \left(\int_{\hat{A}_n(t)}|v_n(t)|^6\td x\right)^{\frac13} \\
            \le       & C_9\left(\int_{A_{n}(t)}|u(t)|^{6}\td x\right)^{\frac23}\|\nabla v_n(t)\|^2, ~~\forall t_{i}\le  t\le  T_n.
        \end{aligned}
    \end{equation}
    By the compactness of $\mathscr{A}$ again, there exists  $n_0 \in \mathbb{N}$ which is greater than $n_1+6C_5+6B_{\V}$ such that
    \begin{equation}\label{eq5.51}
        \sup_{t\in \R}\int_{A_{n_0}(t)}|u(t)|^6\td x\le \left(\frac{\al}{8C_9}\right)^{\frac32},
    \end{equation}
    this implies
    \begin{equation}\label{eq5.52}
        N_n(s)\le \frac{\al}{8}\|\nabla v_n(t)\|^2, ~~\forall t_{i}\le  t\le  T_n.
    \end{equation}
    Substituting above inequality into \eqref{5.44}, we obtain
    \begin{equation}
        \begin{aligned}
            \frac{\td}{\td t}\Psi_\al(t)+\frac{\al}{2} \Psi_\al(t)\le0, ~~ t_{i}\le t\le T_{n_0}.
        \end{aligned}
    \end{equation}
    Then applying Gronwall lemma again, we have exponential decay estimate of energy
    \begin{equation}
        \Psi_\al(t) \le \Psi_\al(t_i)e^{-\frac{\al (t-t_{i})}{2}},
    \end{equation}
    holds on $[t_{i}, T_{n_0}]$.

    Finally, let $i\to \infty$, we can deduce \eqref{5.41}.
\end{proof}
By Lemmas \ref{lem5.3} and \ref{lem5.4}, for $l\ge n_0$, we have $(u(T_{n_0}), u_t(T_{n_0})) \in \V$ and
\begin{equation}
    \|(u(T_{n_0}), u_t(T_{n_0}))\|_{\V}\le r_0.
\end{equation}
Since $u(t)$ satisfies problem \eqref{eq1} on $(T_{n_0}, \infty) \times \Omega$ with initial value $(u(T_{n_0}), u_t(T_{n_0}))$. Applying Lemma \ref{unif_strogn_est}, we find that there exists a constant $l(r_0)>0$ such that $\|(u_0, u_1)\|_{\V} \le Q(r_0+\|\bar{u}_0\|_{H^2(\Omega)})$ for $l\ge \max\{n_0,l(r_0)\}$. Recall that all equilibrium points of $S(t)$ are uniformly bounded in $\V$, thus $\mathscr{A}$ is a bounded subset of $\V$. Furthermore, we can refine the bound of $\mathscr{A}$ by Lemma \ref{unif_strogn_est}.










\end{document}